\numberwithin{table}{section}
\numberwithin{figure}{section}
\numberwithin{equation}{section}
\definecolor{darkblue}{rgb}{.2, 0.2,.8}
\definecolor{darkgreen}{rgb}{0,0.5,0.3}
\definecolor{darkred}{rgb}{.8, .1,.1}
\newcommand{\dd}{\mathrm{d}}
\newtheorem{lemma}{Lemma}[section]
\newtheorem{theorem}[lemma]{Theorem}
\newtheorem{assumption}{Assumption}
\newtheorem{remark}{Remark}[section]
\begin{document}
\title{Hybrid Risk Processes: A Versatile Framework for Modern Ruin Problems}

\author[1]{Oscar Peralta\thanks{\href{mailto:oscar.peralta@itam.mx}{oscar.peralta@itam.mx}}}
\author[1]{Habacuq Vallejo\thanks{\href{lvallej6@itam.mx}{lvallej6@itam.mx}}} 

\affil[1]{Autonomous Technological Institute of M\'exico, Department of Actuarial and Insurance Sciences}
\date{}

\maketitle
\begin{abstract}
We introduce the \textit{hybrid risk process}, constructed via a time-change {transformation} applied to the solution of a hybrid stochastic differential equation. The framework covers several modern ruin settings, incorporating features like Markov-modulation and reserve-dependent parameters through an interdependent structure where the surplus level influences the dynamics of the background environment. The approach lets us define and analyze the \textit{Generalized Omega ruin model}, a novel definition of insolvency that synthesizes concepts like Erlangian, cumulative Parisian and Omega ruin into a unified competing-risks framework. Finally, we show that the models are computationally tractable. By adapting recent matrix-analytic techniques \cite{albrecher2022space}, we provide an efficient way to compute a wide range of ruin-related quantities.
\end{abstract}
\section{Introduction}
\label{sec:introduction}

Financial-stability regulation, such as those guided by Solvency II, mandate that insurers hold sufficient capital to absorb unexpected losses and ensure long-term viability. A central principle of these regulations is an explicit constraint on the probability of ruin; for instance, the Solvency II requirement stipulates that this probability {must} not exceed 0.5\% annually. This regulation requires accurate models of the surplus process and its ruin probabilities, which are fundamental inputs for determining capital adequacy and formulating risk management strategies.

The classical Cram\'er-Lundberg model serves as the theoretical foundation of ruin theory. Introduced by Lundberg in 1903 \cite{lundberg1903approximerad} and later placed on a firm probabilistic footing by Cram\'er \cite{cramer1959mathematical}, it describes an insurer's surplus process, $\{U(t)\}_{t \ge 0}$, as
\begin{align}\label{eq:intro1}
U(t) = u + c t - \sum_{i=1}^{N(t)} X_i, \quad t \ge 0,
\end{align}
where $u$ is the initial capital, $c$ is the constant premium rate, $\{N(t)\}_{t \ge 0}$ is a Poisson process counting claims, and $\{X_i\}_{i=1}^\infty$ are the independent and identically distributed claim severities. Within this framework, ruin occurs if the surplus ever falls below zero. Despite its simplifying assumptions, the Cram\'er-Lundberg model's analytical tractability has made it an indispensable tool for understanding long-term solvency.

Since the inception of this seminal model, research in actuarial science has focused on developing more realistic frameworks. Extensions have included generalizing claim arrivals beyond the Poisson process \cite{anderson1957ruin}, introducing dividend strategies \cite{definetti1957dividens, avanzi2009strategies}, incorporating dynamic economic environments \cite{asmussen1989risk}, and allowing premiums to adapt to the current surplus level \cite{Taylor01041980}. Further generalizations have enhanced the underlying stochastic structure, adding diffusion to represent volatility \cite{gerber1970extension} or employing general L\'evy processes to accommodate heavy-tailed claims and market shocks \cite{kluppelberg2004ruin}. In parallel, the definition of ruin has been refined. For instance, some models consider insolvency under discrete monitoring, where ruin is only declared if an observer arriving at Poissonian times finds the surplus to be negative \cite{albrecher2017strikingly}. Other refinements include cumulative Parisian ruin, which allows for a grace period for the total time spent in deficit \cite{guerin2017distribution, bladt2019parisian}, and the Omega model, which distinguishes between a negative surplus and ultimate bankruptcy \cite{albrecher2011gammaomega,gerber2012omega}.

Our contributions are threefold. The first is the introduction of a new modeling framework, the \textit{hybrid risk process}. Constructed via a time-change transformation of a hybrid stochastic differential equation \cite{asmussen2002erlangian,stanford2005phase,bladt2019parisian,peralta2023ruin}, this approach captures the interplay between an insurer's financial state and its operating environment. Its surplus and environment evolve jointly: the surplus evolves in a possibly diffuse, level-dependent manner, while the environment transitions with an intensity that is itself a function of the surplus level. This feedback mechanism, absent in standard Markov-modulated models \cite{asmussen2010ruin}, offers an alternative foundation for more realistic risk models.

Building directly on this framework, our second contribution is the \textit{Generalized Omega ruin} model. By exploiting the structure of the hybrid risk process, we propose a model that unifies and extends several modern ruin definitions. It treats insolvency as the outcome of a competition between different risks, which is modeled using two distinct, level-dependent subintensity matrices that govern the process depending on whether the surplus is positive or negative. This lets us analyze ruin criteria based on multiple factors simultaneously, such as the horizon of the process (as in the Erlangian model), the accumulated duration of its deficit (as in cumulative Parisian ruin) and its severity (as in the Omega model).

A theoretical framework of this generality requires an efficient computational method to be of practical use. Therefore, our third contribution is to demonstrate the tractability of these models. We show that key ruin descriptors can be approximated by adapting the matrix-analytic framework of \cite{albrecher2022space}. The method is based on a spatial discretization of the surplus, which transforms the problem into the analysis of an approximating multi-regime Markov-modulated Brownian motion. This provides a method for computing ruin-related quantities.

The remainder of this paper is organized as follows. In Section \ref{sec:background}, we review the theory of phase-type distributions and hybrid stochastic differential equations. Section \ref{sec:hybrid} details the construction of the hybrid risk process, and Section \ref{sec:model-examples} shows how it integrates many existing models. In Section \ref{sec:ruin-examples}, we formulate various ruin definitions within our framework, leading to the formal introduction of the Generalized Omega model. Section \ref{sec:ruin-hybrid} presents the computational toolbox for calculating ruin descriptors for these processes. Finally, Section \ref{sec:conclusion} provides concluding insights.

\section{Background}\label{sec:background}

This work builds upon two mathematical frameworks. The first is phase-type distributions, which are widely used in actuarial science for their tractability and denseness (see, e.g., \cite{asmussen2010ruin, bladt2017matrix}). The second is the more recent framework of hybrid stochastic differential equations with state-dependent switching, a class of models that is now common because it can capture systems where a diffusion interacts with a finite-state environment. (see, e.g., \cite{yin2010hybrid}). While not traditionally prominent in risk theory under this name, their core feature is particularly relevant for actuarial applications. They describe systems where a continuous process interacts with a finite-state environment where the transition rates of the environment depend explicitly on the current level of the continuous process. This feedback structure is essential to capture realistic dynamics in which economic, regulatory, or operational conditions react to the financial state of an insurer. We review the essential properties of both frameworks that are foundational to our analysis.

\subsection{Phase-type Distributions}

A phase-type (PH) distribution describes the time until absorption of a continuous-time Markov process on a finite state space. More specifically, consider a Markov process on the state space \( \{1, 2, \dots, d, d+1\} \), where states \(1, \dots, d\) are transient and state \(d+1\) is absorbing. The time-homogeneous infinitesimal generator of this process has the form
\[
\begin{pmatrix}
\bm{T} & \bm{t} \\
\bm{0} & 0
\end{pmatrix},
\]
where \( \bm{T} \in \mathbb{R}^{d \times d} \) is a sub-intensity matrix (i.e., it has non-negative off-diagonal entries and non-positive row sums), and the vector of exit rates to the absorbing state is given by \( \bm{t} = -\bm{T} \bm{e} \), with \(\bm{e}\) being a column vector of ones. Given an initial probability distribution \(\bm{\alpha} \in \mathbb{R}^d\) in the transient states, the time to absorption, \( \tau \), follows a phase-type distribution, denoted \(\tau \sim \mbox{PH}(\bm{\alpha}, \bm{T})\). For a comprehensive and modern treatment specialized in this topic, we refer the reader to \cite{bladt2017matrix}.

The class of PH distributions is dense in the set of all distributions on the positive real line, meaning that they can approximate any non-negative random variable arbitrarily well in a weak convergence sense. Their appeal lies in their matrix-analytic structure, which yields closed-form expressions for key quantities. In particular, the probability density function is given by
\[
f(x) = \bm{\alpha} e^{\bm{T}x} \bm{t}, \quad x \geq 0.
\]
In risk theory, the primary application of phase-type distributions is to render classical models analytically and computationally tractable. By assuming that claim sizes or interarrival times follow a phase-type distribution, classical frameworks like the Cramér-Lundberg and Sparre-Andersen models, which are often intractable for general distributions, become solvable using matrix-analytic methods. In these models, the use of PH-distributed components preserves an underlying Markovian structure, which enables the derivation of explicit or semi-explicit solutions for ruin probabilities (see e.g. \cite{asmussen2010ruin}).

A limitation of classical PH distributions is their inherently light-tailed nature, as their survival functions decay exponentially. This makes them unsuitable for modeling heavy-tailed risks. To overcome this, a recent stream of research, initiated in \cite{albrecher2018ph}, has focused on inhomogeneous phase-type (IPH) distributions, leading to further generalizations in \cite{bladt2022heavy}. An IPH distribution corresponds to the absorption time of a time-inhomogeneous Markov process, characterized by an initial distribution \(\bm{\alpha}\) and a time-inhomogeneous subintensity matrix \(\bm{T}(x)\) for $x\ge 0$. The density function is given in terms of a product integral as
\[
f(x) = \bm{\alpha} \left( \prod_{u=0}^x (\bm{I} + \bm{T}(u)\dd u) \right) \bm{t}(x),\quad x \geq 0,
\]
where \(\bm{t}(x) = -\bm{T}(x)\bm{e}\). The product integral term $\prod_{u=0}^x (\bm{I} + \bm{T}(u)\dd u$ represents the matrix of transition probabilities over the interval \([0,x]\), generalizing the matrix-exponential \(\mbox{Exp}(\bm{T}x)\) for a time-varying generator.

The extension from PH to IPH distributions can generate heavy-tailed behavior and offers greater flexibility for fitting complex, real-world data. A particularly useful special case, which is central to the developments in this paper, arises when the sub-intensity matrix has the form \(\bm{T}(x) = c(x)\bm{T}\) for $x\ge 0$, where \(c(x)>0\) is a scalar rate function and \(\bm{T}\) is a constant sub-intensity matrix. In this case, the density simplifies to
\[
f(x) = \bm{\alpha} e^{\bm{T}C(x)} \bm{T} c(x) \bm{e},\quad x \geq 0,
\]
where \(C(x) = \int_0^x c(u) \dd u\) is the cumulative rate function. From a practical perspective, both standard and inhomogeneous phase-type distributions are supported by flexible numerical implementations, such as the \texttt{R} package \texttt{matrixdist} \cite{bladt2021matrixdist}.

\subsection{Hybrid Stochastic Differential Equations}
A hybrid stochastic differential equation (hybrid SDE) describes a continuous process coupled with a finite-state jump process; each influences the other. We denote the solution by the pair \( (J,X)=\{(J(t),X(t))\}_{t\ge0} \), where \(J\) is a finite-state jump process representing the random environment in which the system operates, and \(X\) is a one-dimensional, continuous-path process representing the state of that system. The defining characteristic of the models considered in this paper is the interlaced evolution of their components. The drift and diffusion coefficients of the process \(X\) at time \(t\) depend on the state of the environmental process, \(J(t)\). Simultaneously, the process \(J\) switches between its states with an intensity that is a function of the level of \(X(t)\). More intuitively, for an infinitesimal time interval \(\dd t\), the probability of \(J\) transitioning from state \(i\) to state \(j\) is given by
\[
\mathbb{P}(J(t+\dd t)=j | J(t)=i, X(t)=x) = \delta_{ij} + \Lambda_{ij}(x)\dd t + o(\dd t),
\]
where \(\bm{\Lambda}(x)=\{\Lambda_{ij}(x)\}\) is a level-dependent intensity matrix and \(\delta_{ij}\) is the Kronecker delta. Throughout this paper, the term hybrid SDE will refer to a model with this state-dependent switching feature.

Formally, this system is described by the pair of stochastic differential equations
\begin{align}
\dd X(t) &= \mu(J(t), X(t)) \dd t + \sigma(J(t), X(t)) \dd B(t), \quad X(0) = x_0 \in \mathbb{R}, \label{eq:sde_x_final} \\
\dd J(t) &= \int_{\mathbb{R}_+} h(J(t-), X(t-), z) \mathfrak{p}(\dd t, \dd z), \quad J(0) = i_0 \in \mathcal{E}, \label{eq:sde_j_final}
\end{align}
where \(B\) is a standard Brownian motion and \(\mathfrak{p}(\dd t, \dd z)\) is a Poisson random measure on \(\mathbb{R}_+ \times \mathbb{R}_+\) with intensity measure \(\dd t \dd z\). To ensure the integral representation for \(J\) in \eqref{eq:sde_j_final} is equivalent to the probabilistic description governed by \(\bm{\Lambda}(x)\), the function \(h\) is defined via a uniformization argument. This involves partitioning the mark space of the Poisson measure into regions whose sizes correspond to the entries of the intensity matrix \(\bm{\Lambda}(x)\), such that a random mark falling in a given region triggers the associated jump. For the rigorous details of this construction, we refer the reader to \cite{albrecher2022space}.

For a strong solution of the hybrid SDE \eqref{eq:sde_x_final}--\eqref{eq:sde_j_final} to be well-defined, we impose the regularity conditions set forth in \cite{albrecher2022space}.

\begin{assumption}[Existence of a Strong Solution]
\label{ass:existence_final}
The coefficients of the hybrid SDE satisfy the following conditions.
\begin{enumerate}
    \item[A1.] For each environmental state \(i \in \mathcal{E}\), the drift and diffusion coefficients, \(\mu(i, \cdot)\) and \(\sigma(i, \cdot)\), are Lipschitz-continuous.
    \item[A2.] The family of intensity matrices governing the environmental switching, \(\{\bm{\Lambda}(x)\}_{x \in \mathbb{R}}\), is uniformly bounded.
\end{enumerate}
\end{assumption}

Under Assumptions A1--A2, it is shown in \cite{albrecher2022space} that a strong solution for the pair \((J,X)\) can be constructed explicitly by recursively concatenating the paths of the continuous process between the jump times of the environment. While these particular regularity conditions are chosen because they facilitate a clear construction, more general conditions for the existence of solutions can be found in the literature (see, e.g., \cite{yin2010hybrid}).

A crucial feature of the solution \((J,X)\) is that its second component, representing the surplus, has almost surely continuous paths. Although this framework is powerful for modeling systems with interacting continuous and discrete components, the lack of jumps in the surplus process makes it unsuitable for direct application in most actuarial contexts, where large, instantaneous claim arrivals are a fundamental feature. This has historically limited the use of such models in risk theory.

One of the main contributions of this paper is to bridge this gap. We introduce a modification to the hybrid SDE framework through a time-change transformation that results in a new process that retains the state-dependent structure of the original model while also incorporating the necessary jumps to model claims and other capital flows. This construction, which we term a hybrid risk process, forms the core of our proposed framework and is detailed in the following section.

\section{Hybrid Risk Processes}\label{sec:hybrid}

We start with an underlying hybrid SDE, whose solution we denote by \((J,X)\). We partition the finite state space of $J$ into three disjoint sets,
\[
\mathcal{S}^p \cup \mathcal{S}^+ \cup \mathcal{S}^-,
\]
where \(\mathcal{S}^p\) must be non-empty, while the union \(\mathcal{S}^+ \cup \mathcal{S}^-\) may be empty.
The behavior of the process \(X\) at time $t\ge 0$ is determined by the region in which \(J(t)\) resides, as detailed next.
\begin{itemize}
    \item When the environmental process \(J(t)\) is in a state \(i \in \mathcal{S}^p\), the process \(X\) exhibits general diffusive behavior. We refer to \(\mathcal{S}^p\) as the set of premium-paying states. In these states, the drift \(\mu(i,x)\) and volatility \(\sigma(i,x)\) are without sign or magnitude restrictions; in particular, either or both may be zero.
    \item When \(J(t) \in \mathcal{S}^-\), the volatility vanishes (i.e., \(\sigma(J(t),x)=0\)), and the drift \(\mu(J(t),x)\) is strictly negative. These states will be used to construct the downward jumps corresponding to insurance claims {or other expense events}.
    \item When \(J(t) \in \mathcal{S}^+\), the volatility also vanishes, while the drift \(\mu(J(t),x)\) is strictly positive. These states will be used to construct upward jumps, which can represent capital injections or other income events.
\end{itemize}

We now introduce the hybrid risk process \((L,R)\) through a time-change transformation. First, we define an operational time that only runs when the process is in a premium-paying state,
\begin{equation}\label{eq:theta}
\theta_t = \int_0^t \mathds{1}_{\{J_s \in \mathcal{S}^p\}} \dd s,
\end{equation}
where \(\mathds{1}\) denotes the indicator function. The generalized inverse of this operational time is defined by
\begin{equation}\label{eq:thetaInv}
\theta^\leftarrow_t = \inf\{ s \ge 0 : \theta_s \ge t \}.
\end{equation}
Then, the hybrid risk process \((L,R)\) is defined by evaluating the original process \((J,X)\) at this new time scale, that is,
\begin{equation}\label{eq:hybridRiskProcess}
(L_t, R_t) = ( J_{\theta^\leftarrow_t}, X_{\theta^\leftarrow_t} ).
\end{equation}
The effect of this transformation is to ``delete'' the time intervals where \(J\) sojourns in the drift-only states \(\mathcal{S}^+\) or \(\mathcal{S}^-\). The change in the level of the process \(X\) accumulated during these deleted periods is preserved, however, by being compressed into an instantaneous jump in the new process \(R\). The continuous evolution of \(X\) during the ``active'' periods in \(\mathcal{S}^p\) is directly mapped to the continuous evolution of \(R\). A pathwise illustration of this technique is shown in Figure \ref{fig:path1}.
\begin{figure}[htbp]
 \centering
 \includegraphics[scale=0.33]{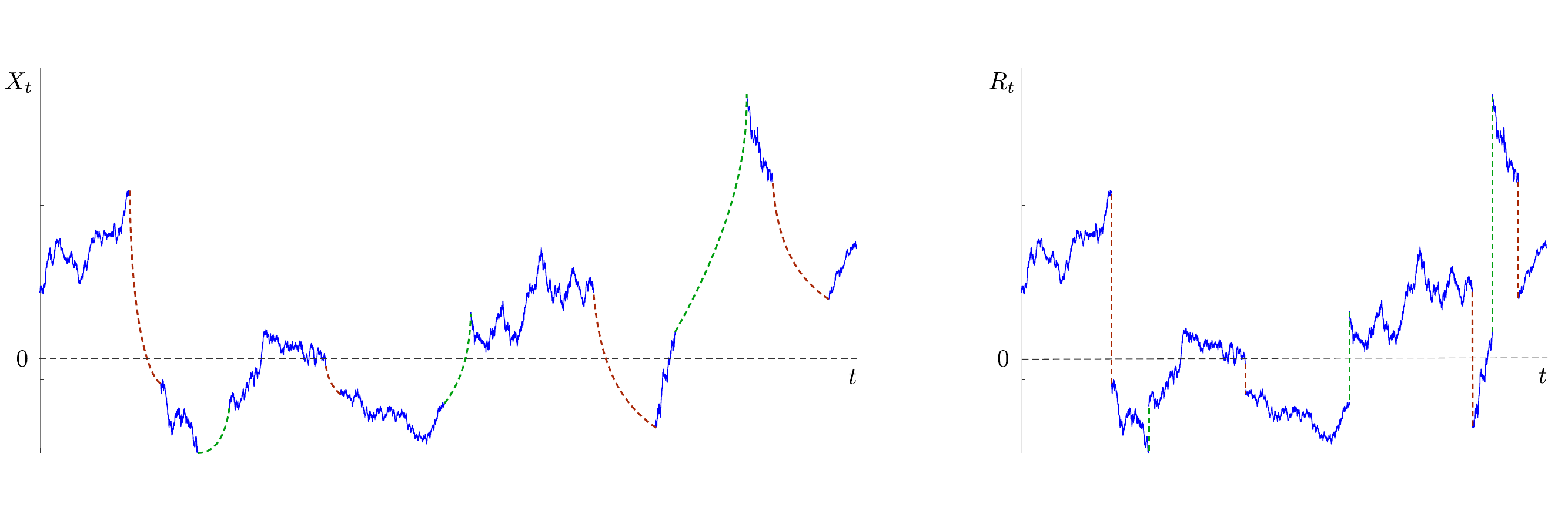}
 \caption{Time-change illustration. Left: A sample path of the process $X$. {Right}: The corresponding path of the transformed process $R$.}
 \label{fig:path1}
\end{figure}

These instantaneous jumps have clear interpretations in risk modeling. A sojourn in \(\mathcal{S}^-\) results in a downward jump in \(R\), representing a claim amount. Conversely, a sojourn in \(\mathcal{S}^+\) results in an upward jump, which can be interpreted as a capital injection. This time-change technique transforms the continuous SDE into a process with jumps, making it suitable for actuarial applications.

\begin{remark}
    The construction method for hybrid risk processes is distinct from standard jump-diffusion models where a separate jump process, such as a compound Poisson process, is typically added to the diffusion equation. The primary advantage of our approach is that the resulting hybrid risk process, \((L,R)\), inherits the analytical structure of the underlying hybrid SDE. Because the framework proposed here leverages space-grid methods for hybrid SDEs (see Section \ref{sec:ruin-hybrid}), we offer a computational framework that, to our knowledge, is not available for generic jump-diffusions.
\end{remark}

To characterize the jumps of the process \(R\), we partition the intensity matrix \(\bm{\Lambda}(x)\) of the process \(J\) according to the state space decomposition \(\mathcal{S}^p \cup \mathcal{S}^+ \cup \mathcal{S}^-\). We impose the structural assumption that a claim event and a capital injection cannot be initiated at the same time. This means that once a sojourn in \(\mathcal{S}^-\) or \(\mathcal{S}^+\) begins, the process must return to a premium-paying state in \(\mathcal{S}^p\) before a jump of the opposite type can occur. This constraint is enforced by setting the corresponding blocks of the intensity matrix to zero, which leads to the structure
\[
\bm{\Lambda}(x) = \begin{pmatrix}
\bm{\Lambda}_{\mathcal{S}^p\mathcal{S}^p}(x) & \bm{\Lambda}_{\mathcal{S}^p\mathcal{S}^+}(x) & \bm{\Lambda}_{\mathcal{S}^p\mathcal{S}^-}(x) \\
\bm{\Lambda}_{\mathcal{S}^+\mathcal{S}^p}(x) & \bm{\Lambda}_{\mathcal{S}^+\mathcal{S}^+}(x) & \bm{0} \\
\bm{\Lambda}_{\mathcal{S}^-\mathcal{S}^p}(x) & \bm{0} & \bm{\Lambda}_{\mathcal{S}^-\mathcal{S}^-}(x)
\end{pmatrix},\quad x\in\mathbb{R}.
\]
The following theorem provides the intensity for the upward and downward jumps of the process \(R\). A crucial consequence of this result is that it establishes a clear link between our time-change construction and the class of IPH distributions, as the jump sizes are shown to follow such a distribution.
\begin{theorem}
\label{thm:jump_density_restructured}
For \(y > 0\) and \(i,j \in \mathcal{S}^p\), the intensity of a downward jump of size approximately \(y\) from level \(x\) is given by
\begin{align*}
&\mathbb{P}(\text{downward jump in }(t, t+\dd t)\text{ of size in }(y, y+\dd y), L_{t+\dd t}=j \mid R_t=x, L_t=i) = \\
&\quad \bm{e}_i^\intercal \bm{\Lambda}_{\mathcal{S}^p\mathcal{S}^-}(x) \left( \prod_{u=0}^y (\bm{I} + \bm{\Delta}_\mu(x-u)\bm{\Lambda}_{\mathcal{S}^-\mathcal{S}^-}(x-u)\dd u) \right) \bm{\Delta}_\mu(x-y)\bm{\Lambda}_{\mathcal{S}^-\mathcal{S}^p}(x-y) \bm{e}_j \dd y \dd t,
\end{align*}
where \(\bm{\Delta}_\mu(z) = \text{diag}(|1/\mu(k,z)|\,:\,k\in \mathcal{S}^-)\) is the diagonal matrix of inverse drift rates for each state \(k \in \mathcal{S}^-\) at level \(z\). An analogous result holds for upward jumps.
\end{theorem}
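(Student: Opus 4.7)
The plan is to exploit the one-to-one correspondence between sojourns of $J$ in $\mathcal{S}^-$ and downward jumps of $R$ induced by the time change \eqref{eq:theta}--\eqref{eq:hybridRiskProcess}, and then reduce the computation to the inhomogeneous phase-type density formula reviewed in Section \ref{sec:background}. First, I would argue by the strong Markov property of $(J,X)$ that, conditioned on $\{L_t=i, R_t=x\}$, the event ``a downward jump is initiated in the infinitesimal interval $(t,t+\dd t)$ and lands in $\mathcal{S}^-$-state $k$'' has probability $\Lambda_{ik}(x)\dd t + o(\dd t)$, since $\theta$ runs at unit speed while $J\in\mathcal{S}^p$. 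Summing this observation over $k\in\mathcal{S}^-$ extracts the initial vector $\mathbf{e}_i^\intercal \boldsymbol{\Lambda}_{\mathcal{S}^p\mathcal{S}^-}(x)$ that appears on the left of the product integral.

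Next, I would condition on the entry state $k\in\mathcal{S}^-$ at level $x$ and analyze the subsequent sojourn. While $J\in\mathcal{S}^-$, we have $\sigma\equiv 0$ and $\mu(J_s,X_s)<0$, so $X$ is strictly decreasing and the pair $(J,X)$ is a deterministic-in-$X$, Markovian-in-$J$ system. The core step is a pathwise change of variable from calendar time to level: setting $z=X_s$, we have $\dd s = |\mu(J_s,z)|^{-1}\dd z$, so the probability that $J$ transitions from state $k$ to state $k'\in\mathcal{S}^-$ while the level drops by $\dd u$ is $\Lambda_{kk'}(x-u)\,|\mu(k,x-u)|^{-1}\dd u+o(\dd u)$. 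In matrix form, this identifies the generator of the level-parameterized chain on $\mathcal{S}^-$ as $\boldsymbol{\Delta}_\mu(x-u)\boldsymbol{\Lambda}_{\mathcal{S}^-\mathcal{S}^-}(x-u)$, with exit intensity to state $j\in\mathcal{S}^p$ equal to $\boldsymbol{\Delta}_\mu(x-u)\boldsymbol{\Lambda}_{\mathcal{S}^-\mathcal{S}^p}(x-u)\mathbf{e}_j$ (the block to $\mathcal{S}^+$ is zero by the structural assumption on $\boldsymbol{\Lambda}$, so the chain can only be absorbed back into $\mathcal{S}^p$).

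With this setup in place, the probability of the chain surviving in $\mathcal{S}^-$ while the level drops by $y$ is precisely the product integral $\prod_{u=0}^y (\mathbf{I}+\boldsymbol{\Delta}_\mu(x-u)\boldsymbol{\Lambda}_{\mathcal{S}^-\mathcal{S}^-}(x-u)\dd u)$, and the density of the absorption ``level'' equals this product integral multiplied by the exit intensity at $x-y$. This is just the IPH density formula applied to the inhomogeneous subintensity matrix $\boldsymbol{\Delta}_\mu(\cdot)\boldsymbol{\Lambda}_{\mathcal{S}^-\mathcal{S}^-}(\cdot)$. Combining the three pieces --- the entry intensity into $\mathcal{S}^-$, the product-integral survival factor, and the exit intensity into $j$ --- and noting that the entire $\mathcal{S}^-$-sojourn is collapsed into a single time instant of the $(L,R)$ clock, yields the asserted joint intensity.

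The main obstacle is the rigorous justification of the pathwise change of variable from time to level and the resulting identification of $\boldsymbol{\Delta}_\mu\boldsymbol{\Lambda}_{\mathcal{S}^-\mathcal{S}^-}$ as the generator of a genuine (time-inhomogeneous) Markov chain in the level variable; this requires Assumption A1 to control $\mu$ away from zero on $\mathcal{S}^-$ (so that $\boldsymbol{\Delta}_\mu$ is well-defined and bounded on compact sets), together with Assumption A2 to guarantee that the product integral converges. The argument for upward jumps is symmetric, with $\mathcal{S}^-$ replaced by $\mathcal{S}^+$ and the level parameterization running in the opposite direction.
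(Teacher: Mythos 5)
Your proposal is correct and follows essentially the same route as the paper's proof: entry intensity $\bm{e}_i^\intercal\bm{\Lambda}_{\mathcal{S}^p\mathcal{S}^-}(x)$, reparameterization of the $\mathcal{S}^-$-sojourn by the accumulated level drop (your change of variable $\dd s = |\mu(k,x-u)|^{-1}\dd u$ is exactly the paper's ``evolution with respect to jump size''), yielding the generator $\bm{\Delta}_\mu(x-u)\bm{\Lambda}_{\mathcal{S}^-\mathcal{S}^-}(x-u)$, the product-integral propagation, and the exit intensity $\bm{\Delta}_\mu(x-y)\bm{\Lambda}_{\mathcal{S}^-\mathcal{S}^p}(x-y)$. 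Your added remarks on the structural zero block to $\mathcal{S}^+$ and on the regularity needed for the change of variable are consistent with, and slightly more explicit than, the paper's argument.
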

\begin{proof}
We provide the proof for the downward jump case; the argument for an upward jump is analogous. A downward jump in \((L,R)\) corresponds to an excursion of \((J,X)\) through the state space \(\mathcal{S}^-\). The process unfolds in three steps:
\begin{enumerate}
    \item The process \(J\) transitions from a state \(i \in \mathcal{S}^p\) to an initial state within \(\mathcal{S}^-\). The matrix of intensities for this entry is \(\bm{\Lambda}_{\mathcal{S}^p\mathcal{S}^-}(x)\).
    \item The process \(J\) sojourns in \(\mathcal{S}^-\). During this sojourn, while the environment is in state \(k \in \mathcal{S}^-\), the surplus \(X\) drifts downward at a state- and level-dependent speed given by \(\mu(k,X_s)\). The total accumulated drift over the sojourn duration determines the jump size \(y\).
    \item The process \(J\) exits \(\mathcal{S}^-\) by jumping to a state \(j \in \mathcal{S}^p\).
\end{enumerate}
To find the density of the jump size \(y\), we consider the evolution of the environmental process \(J\) with respect to the accumulated jump size itself, rather than sojourn time. When the accumulated jump size is \(y\) and the environment is in state \(k\), the surplus level is \(x-y\). At this point, the speed at which the jump size increases is given by \(|\mu(k,x-y)|\). To accumulate a further infinitesimal jump of size \(\dd y\), it takes an infinitesimal amount of sojourn time \(\dd s = \dd y / |\mu(k,x-y)|\).

The intensity matrix for transitions within \(\mathcal{S}^-\) with respect to sojourn time \(s\) is \(\bm{\Lambda}_{\mathcal{S}^-\mathcal{S}^-}(x-y)\). To find the intensity matrix with respect to the jump size \(y\), we scale the time-based intensities by the amount of time required to accumulate a unit of jump size. This yields an intensity matrix with respect to \(y\), given by the product
\[
\bm{T}^-(y;x) = \bm{\Delta}_\mu(x-y) \bm{\Lambda}_{\mathcal{S}^-\mathcal{S}^-}(x-y).
\]
The matrix of transition probabilities for the environment over a jump of size \(y\) is then given by the product integral \(\prod_{u=0}^y (\bm{I} + \bm{T}^-(u;x)\dd u)\).

Similarly, the matrix of exit intensities from \(\mathcal{S}^-\) to \(\mathcal{S}^p\) per unit of jump size is \(\bm{\Delta}_\mu(x-y)\bm{\Lambda}_{\mathcal{S}^-\mathcal{S}^p}(x-y)\). Combining the intensity of entry, the propagation during the sojourn (the product integral), and the intensity of exit gives the overall intensity stated in the theorem.
\end{proof}

While this construction provides a rich model, a significant feature is that the jump-size distributions are, by construction, dependent on the surplus level at which they are initiated. This allows for the realistic modeling of heavy-tailed phenomena where the
severity of a shock is linked to the system’s current state. {In a reinsurance context, for instance, the financial consequences of a catastrophic claim are amplified if the insurer's surplus is low. Similarly, regulatory actions, often triggered by low surplus levels, can compound the impact of a large claim, meaning its effective cost distribution is conditional on the insurer's capital at that moment.}

This level-dependency is a flexible feature, not a rigid constraint. If a modeler requires traditional, level-independent claim sizes, this can be achieved by setting all parameters governing the jump to be constant, i.e., making the entry intensities \(\bm{\Lambda}_{\mathcal{S}^p\mathcal{S}^-}(x)\), the drift functions \(\mu(k,x)\) for all \(k \in \mathcal{S}^-\), and the sojourn intensities \(\bm{\Lambda}_{\mathcal{S}^-\mathcal{S}^-}(x)\) and \(\bm{\Lambda}_{\mathcal{S}^-\mathcal{S}^p}(x)\) all independent of the level \(x\). In this scenario, the general matrix distribution for the jump size simplifies to a standard homogeneous phase-type distribution. The ability to accommodate both complex, level-dependent jumps and classic PH jumps (the latter being dense in the space of all positive distributions) highlights the framework's robustness.

Despite the model's richness, the resulting process is (in general) too complex for direct analytical calculations. Therefore, to compute ruin probabilities and other relevant quantities, we will rely on the space-grid approximation scheme, which will be discussed in more detail in Section \ref{sec:ruin-hybrid}.

\section{Examples of Risk Models}\label{sec:model-examples}

In this section, we illustrate how a wide range of modern risk models can be viewed as specific instances of our construction. For each example, we will specify the necessary choices for the components of the underlying hybrid SDE that are required to recover the model in question: the state space partition $\mathcal{S}^p \cup \mathcal{S}^+ \cup \mathcal{S}^-$, the drift and diffusion functions $\mu$ and $\sigma$, and the level-dependent intensity matrix $\bm{\Lambda}(x)$. The models to be considered include Markov-modulated risk processes with diffusion, various dividend payment strategies, models with reserve-dependent premiums, and frameworks that generate heavy-tailed claims.

\subsection{Markov-Modulated Processes with Reserve-Dependent Premiums}

Two of the most significant extensions to the classical Cramér-Lundberg model have been the inclusion of a dynamic economic environment and the introduction of premium rates that depend on the current reserve level. Historically, the analysis of such models developed along two methodological streams. The first, based on direct probabilistic arguments, led to integro-differential equations in \cite{Taylor01041980}, culminating in a framework capable of handling both Markov modulation and state-dependent premiums via systems of ordinary differential equations (ODEs) in \cite{asmussen1996phase}. The second stream of research developed from the analogy between risk processes and fluid queues, an approach that has proven fruitful in various contexts, from analyzing risk in Markovian environments \cite{asmussen1989risk} to approximating finite-horizon ruin probabilities \cite{asmussen2002erlangian, badescu2005risk} and solving other complex ruin problems \cite{bladt2019parisian}. The method is attractive because of its conceptual simplicity and the well-developed analytical toolbox for fluid models, with key contributions on their stationary distributions and underlying theory \cite{rogers1994fluid,asmussen1995stationary}.

However, the application of the fluid queue approach in risk theory has been historically limited to models where parameters are constant within each environmental state. While more general level-dependent fluid models were known within the queueing community, their application was often restricted to processes with piecewise-constant dynamics, sometimes called multi-regime fluid queues \cite{horvath2017matrix,akar2021transient}, and the full generality needed to handle continuously reserve-dependent parameters was not widely adopted by risk analysts. Our hybrid risk process framework bridges this methodological gap. It leverages a fluid-like, time-change construction that can now rigorously incorporate the continuously level-dependent features that were previously primarily accessible through the ODE-based approach. The following subsections will demonstrate this unifying power by showing how seminal models from both of these research streams can be recovered as special cases of our construction. For these examples, we assume there are no capital injections, so the state space \(\mathcal{S}^+\) is empty.

\subsubsection*{The ODE Approach: State-Dependent Premiums}

Here we focus on \cite{asmussen1996phase}, as it is a framework that enjoys {tractability} via the numerical solution of a system of ODEs. The core of their model is a Markov-modulated Poisson process for the claim arrivals, which is governed by a background Markov jump process \(\{Z_t\}\) on a finite state space. In each state \(j\) of the environment, claims arrive with Poisson intensity \(\beta_j\), the premium is collected at a rate given by a level-dependent function \(p_j(r)\), and individual claim amounts follow a state-dependent phase-type distribution, \(B_j\). This model can be framed as a hybrid risk process by specifying our parameters as follows.
\begin{itemize}
    \item The set of premium-paying states, \(\mathcal{S}^p\), is the state space of the environmental Markov process \(\{Z_t\}\).
    \item For each environmental state \(j \in \mathcal{S}^p\), the claim size distribution \(B_j\) has a phase-type representation \(\mbox{PH}(\bm{\alpha}_j, \bm{T}_j)\). The set of claim states, \(\mathcal{S}^-\), is the disjoint union of the transient state spaces of all these claim distributions.
    \item The drift function incorporates the state- and level-dependency of the premium. For \(j \in \mathcal{S}^p\), we set \(\mu(j,x) = p_j(x)\). For states corresponding to the accumulation of a claim, \(k \in \mathcal{S}^-\), the drift is \(\mu(k,x) = -1\).
    \item The diffusion function is zero for all states, \(\sigma(i,x) = 0\), as the model is non-diffusive.
    \item The intensity matrix \(\bm{\Lambda}(x)\) is independent of the level \(x\). With respect to its block structure, \(\bm{\Lambda}_{\mathcal{S}^p\mathcal{S}^p}\) is the constant generator of the environmental process \(\{Z_t\}\). The block \(\bm{\Lambda}_{\mathcal{S}^p\mathcal{S}^-}\) governs claim arrivals; for an environmental state \(j \in \mathcal{S}^p\), the intensity of transitioning to a claim phase \(k\) associated with distribution \(B_j\) is given by \(\beta_j(\bm{\alpha}_j)_k\). The block \(\bm{\Lambda}_{\mathcal{S}^-\mathcal{S}^-}\) is a block-diagonal matrix containing the sub-intensity matrices \(\bm{T}_j\) for each claim type. Finally, the block \(\bm{\Lambda}_{\mathcal{S}^-\mathcal{S}^p}\) governs the completion of a claim; a claim of type \(j\) terminates and returns the environment to state \(j \in \mathcal{S}^p\) with intensities given by the exit vector \(\bm{t}_j = -\bm{T}_j\bm{e}\).
\end{itemize}
This shows that our framework recovers this advanced, unified model. However, our framework further generalizes it by allowing the intensity matrix \(\bm{\Lambda}(x)\) itself to be level-dependent, a feature not considered in \cite{asmussen1996phase}.

\begin{remark}
The model in \cite{asmussen1996phase} was later generalized in \cite[Section 11.1.3]{bladt2017matrix} by replacing the Markov-modulated Poisson arrival process with a Markovian Arrival Process. This more general model can also be incorporated into our hybrid risk process framework, though the setup of the state space and intensity matrix becomes considerably more involved. For the sake of brevity and clarity, we do not detail this specific embedding here.
\end{remark}

\subsubsection*{The Fluid Queue Approach}

The fluid queue approach, known for its simplicity, has been a cornerstone of modern risk theory. A key example that showcases the power of this method is \cite{bladt2019parisian}, which analyzes risk processes that exhibit dependent components. Focusing on their underlying risk model without a diffusion component, their methodology is based on embedding the risk process into an equivalent fluid queue process. The core of this embedding is the construction of a continuous-path process where instantaneous claim jumps are ``straightened out'' into periods of linear downward drift with a constant slope, typically $-1$. The time spent in these downward-drifting periods corresponds to the size of the claims, while the time spent in the upward-drifting premium-collecting periods corresponds to the inter-arrival times. In their model, both inter-arrival times and claim sizes are phase-type distributed, allowing a general dependency structure to be introduced between them.

This construction fits very naturally as a special case of our hybrid risk process. In fact, the embedding described in \cite{bladt2019parisian} is conceptually identical to our time-change transformation, only that theirs is set up in a simpler and less powerful framework. In our framework, the mapping is as follows.
\begin{itemize}
    \item The set of premium-paying states, \(\mathcal{S}^p\), corresponds to the set of phases for the phase-type inter-arrival time distribution.
    \item The set of claim states, \(\mathcal{S}^-\), corresponds to the set of phases for the phase-type claim size distribution.
    \item The drift function is piecewise constant. For any state \(i \in \mathcal{S}^p\), the drift is the constant premium rate, \(\mu(i,x) = c\). For any state \(k \in \mathcal{S}^-\), the drift represents the constant rate of decrease in the fluid level, which corresponds to setting \(\mu(k,x) = -1\).
    \item The diffusion function is \(\sigma(i,x) = 0\).
    \item The intensity matrix \(\bm{\Lambda}(x)\) is constant. Its block structure, which defines the transitions within and between the inter-arrival and claim phases, captures the dependency structure of the model and is given explicitly in \cite{bladt2019parisian}.
\end{itemize}

This demonstrates that common fluid queue-based models, which include the classical Cramér-Lundberg and Sparre-Andersen models with phase-type components, are a specific subclass of the hybrid risk process where all defining parameters are piecewise constant and not dependent on the reserve level.

\subsection{Markov-Modulated Risk Processes with Diffusion Components}

Another significant generalization in risk theory is the inclusion of a diffusion component to model ambient volatility. Our hybrid risk process framework can seamlessly integrate this feature into the Markov-modulated and level-dependent models discussed previously.

The fluid queue perspective can be naturally extended to incorporate a diffusion component, as is done in \cite{bladt2019parisian}. The construction is analogous to the non-diffusive fluid flow process described in the previous subsection; the only modification is the inclusion of a Brownian motion component during the premium-paying periods. Within our hybrid risk process framework, this simply corresponds to setting the diffusion function \(\sigma(i,x) = \sigma > 0\) for all premium-paying states \(i \in \mathcal{S}^p\), while the state space partition, the drift function \(\mu\), and the intensity matrix \(\bm{\Lambda}\) remain unchanged from the non-diffusive case.

Other authors, such as Bäuerle and Kötter \cite{bauerle2005markov}, have also investigated such Markov-modulated diffusion risk models. This structure is particularly prevalent in financial mathematics for option pricing, where the underlying asset follows a jump-diffusion process with parameters modulated by a Markov chain representing the state of the economy \cite{elliott2007pricing}. These financial models can be readily re-contextualized as risk processes within our setup.

More importantly, our framework allows for the inclusion of diffusion in a fully state- and level-dependent manner. This constitutes a direct generalization of the ODE-based model of Asmussen and Bladt \cite{asmussen1996phase}. Now we can consider a model where, for each environmental state \(j \in \mathcal{S}^p\), the premium is a level-dependent function \(p_j(x)\) and the volatility is also a level-dependent function \(\sigma_j(x)\). In this case, the drift and diffusion parameters of our hybrid risk process are simply set to \(\mu(j,x) = p_j(x)\) and \(\sigma(j,x) = \sigma_j(x)\) for \(j \in \mathcal{S}^p\). The ability to handle simultaneous and interacting level-dependencies in the drift, the volatility, and the environmental switching intensities \(\bm{\Lambda}(x)\) within a single, coherent construction is a key strength of the proposed hybrid risk process.
\subsection{Dividend and Taxation Models}

Including dividend strategies makes risk models more realistic because they capture how surplus is withdrawn. These frameworks can also be interpreted as modeling proportional taxation. The flexibility of the hybrid risk process allows for a unified treatment of many classical and modern dividend strategies, where the choice of the drift \(\mu\) and the intensity matrix \(\bm{\Lambda}(x)\) can be used to capture complex payout rules. Key performance metrics in these models include not only the probability of ruin but also the expected discounted value of all dividends paid until ruin and expected occupation times in certain regions of the state space \cite{gerber2012omega}.

The concept of distributing dividends was famously introduced by de Finetti \cite{definetti1957dividens}. Since then, a major focus of the literature has been on barrier strategies \cite{avanzi2009strategies}. In a classical reflection or barrier strategy, any surplus above a predetermined barrier \(b\) is immediately paid out as dividends. This can be modeled in our framework by defining the dynamics of the underlying hybrid SDE to have a reflecting boundary condition at \(b\). A related concept is the refraction strategy, where dividends are paid out at a specific rate \(\delta\) whenever the surplus is above a barrier \(b\) \cite{gerber2006optimal}. This ``refracts'' the path of the surplus process and is a natural fit for our framework, as it corresponds to a level-dependent drift. Specifically, for a state \(i \in \mathcal{S}^p\), the drift becomes \(\mu(i,x) = c_i(x) - \delta\) for all \(x > b\), where \(c_i(x)>\delta\) is the premium rate paid while in environmental state $i$ at the reserve level $x$.

This is an active area; recent work includes extending these classical problems to a more general setting where the underlying risk process is a spectrally negative Lévy process \cite{avram2004exit}. These extensions cover a wide range of sophisticated control problems, including strategies involving both classical and Parisian reflection barriers \cite{avram2018spectrally}. A particularly interesting stream of modern research considers dividend strategies where decisions are not made continuously but at discrete time points. This is often modeled as periodic decisions or as decisions made at the arrival times of an independent Poisson process, which has been studied extensively \cite{albrecher2011optimal, albrecher2011randomized, albrecher2013randomized, zhao2017optimal, noba2018optimal, cheung2019periodic}. The relevance of such models is further highlighted by recent work on Lévy processes subject to level-dependent Poissonian switching \cite{beelders2025evy}.

We now explicitly detail the construction for a specific strategy where dividend payments are controlled by observations occurring at Poisson times with rate \(\lambda\). Let \(b\) be a fixed barrier. If an observer finds the surplus \(R_t > b\), the insurer begins paying dividends at a constant rate \(\delta\). These payments continue until a subsequent observer finds the surplus \(R_t \le b\), at which point they cease. To model this, we augment the environmental state space. The set of premium-paying states becomes \(\mathcal{S}^p_{aug} = \mathcal{S}^p \times \{0,1\}\), where the second coordinate indicates the dividend mode: \(k=0\) for normal operation (not paying dividends), and \(k=1\) for active dividend payment. The full environmental state space for the hybrid SDE is then \((\mathcal{S}^p \times \{0,1\}) \cup \mathcal{S}^+ \cup \mathcal{S}^-\). The strategy is embedded by defining the following parameters.
\begin{itemize}
    \item The drift function \(\mu((i,k), x)\) for \(i \in \mathcal{S}^p\) depends on the dividend mode. For states in the non-paying block, \((i,0)\), the drift is the original premium rate, \(\mu((i,0),x) = c_i(x)\). For states in the paying block, \((i,1)\), the drift is reduced by the dividend payout rate, \(\mu((i,1),x) = c_i(x) - \delta\). The dynamics within \(\mathcal{S}^+\) and \(\mathcal{S}^-\) are unaffected by this logic.
    \item The intensity matrix \(\bm{\Lambda}(x)\) becomes level-dependent to control the switching between dividend modes. For each state \(i \in \mathcal{S}^p\), the intensity of transitioning from a non-paying state \((i,0)\) to a paying state \((i,1)\) is \(\lambda\) if \(x > b\), and 0 otherwise. Conversely, the intensity of transitioning from \((i,1)\) to \((i,0)\) is \(\lambda\) if \(x \le b\), and 0 otherwise. The underlying environmental transitions between different states \(i,j \in \mathcal{S}^p\) are preserved within each dividend mode. 
\end{itemize}
This structure precisely models the described strategy, where a Poisson observation triggers a check against the barrier \(b\), which in turn determines the dividend-paying mode and thus the drift of the surplus process.

\begin{remark}
The example above is one of the simplest forms of a discrete-time dividend strategy. The hybrid SDE framework can capture much more complex behavior. For instance, as explored in \cite{beelders2025evy} for Lévy processes, one could have different observation schemes and dividend rules for up-crossings versus down-crossings of the barrier. This too can be modeled with the hybrid risk process approach.
\end{remark}
\subsection{Heavy-tailed models}

An advantage of the hybrid risk process framework is its natural capacity to generate processes with a rich class of jump distributions, including those with heavy tails, which are crucial for modeling catastrophic risks. As established in Theorem \ref{thm:jump_density_restructured}, the size of a jump in the process $R$ is determined by a sojourn of the underlying process $(J,X)$ in a drift-only state space, such as $\mathcal{S}^-$. The resulting jump size distribution is related to an inhomogeneous phase-type (IPH) distribution whose density is explicitly characterized by the product integral formula.

The model's flexibility is best illustrated in a more structured scenario. If we assume that the drift functions are uniform across all states within the jump region (i.e., $\mu(k,u) = \mu^-(u)$ for all $k \in \mathcal{S}^-$) and that the sojourn intensities $\bm{\Lambda}_{\mathcal{S}^-\mathcal{S}^-}$ are constant, the jump size distribution simplifies to a well-known subclass of IPH distributions. In this case, a scalar rate function $c(y;x)=1/(-\mu^-(x-y))$ governs the evolution of the jump, and the generator for the jump size $y$ takes the form $\bm{T}^-(y;x) = c(y;x)\bm{\Lambda}_{\mathcal{S}^-\mathcal{S}^-}$. This is precisely the structure discussed in \cite[Definition 2.5]{albrecher2018ph}.

The rate function $c(y;x) = 1/|\mu^-(x-y)|$ admits a natural interpretation as a conditional hazard rate. In the context of the jump accumulation, $c(y;x)\dd y$ represents the approximate probability that the jump process terminates (i.e., the underlying process $(J,X)$ returns to a premium-paying state) in the next infinitesimal increment of jump size $(\dd y)$, given that the jump has already reached size $y$. This is analogous to the force of mortality in lifetime data analysis. The functional form of this hazard rate with respect to the accumulated jump size $y$ is what ultimately determines the tail behavior of the resulting jump distribution.

This links between the surplus-dependent behavior of a loss event and the tail of the resulting claim size distribution. We illustrate this with two examples based on this tractable IPH structure.
\begin{itemize}
    \item \textbf{Pareto-like tails:} To generate jumps with a heavy, power-law tail, one can specify a linear drift function during the loss accumulation period, such as $\mu^-(u) = -a - bu$ for $a,b>0$. The drift at the current surplus level $u=x-y$ is then $\mu^-(x-y) = -a -b(x-y)$. The resulting rate function for the IPH is the reciprocal of its magnitude,
    \[ c(y;x) = \frac{1}{a+b(x-y)}. \]
    A rate function that decreases as the reciprocal of the remaining distance to a boundary is characteristic of processes with power-law decay. This choice leads to a matrix-Pareto distribution, as detailed in \cite[Section 3]{albrecher2018ph}.

    \item \textbf{Weibull-type tails:} To generate jumps with Weibull-type tails, one can specify a non-linear drift function of the form $\mu^-(u) = -1/(\beta u^{\beta-1})$. The drift at level $u=x-y$ is $\mu^-(x-y)=-1/(\beta (x-y)^{\beta-1})$, and the corresponding rate function is
    \[ c(y;x) = \beta (x-y)^{\beta-1}. \]
    This rate function is a power function of the remaining distance $(x-y)$, which is the characteristic form of a Weibull hazard rate. This choice directly imparts Weibull-like tail behavior to the jump-size distribution. This construction is detailed in \cite[Section 4.1]{albrecher2018ph}.
\end{itemize}
The ability to generate a wide class of jump distributions connects our framework to a broader research effort into so-called matrix-distributions. While several classes, such as matrix-Pareto and matrix-Weibull distributions, have been proposed to model heavy-tailed phenomena, a recent and robust unification of these ideas can be found in \cite{bladt2022heavy}. Some of the models discussed therein can be recovered with specific choices for the drift functions and sojourn intensities. The key advantage of our construction is that it provides a direct, probabilistic link between the assumed dynamics of the underlying continuous process and the resulting jump size distribution, offering a convenient vehicle for heavy-tailed risk models.

\section{Examples of Types of Ruin}\label{sec:ruin-examples}

Beyond specifying the dynamics of the surplus, a crucial aspect of any risk model is the definition of the ruin event itself. The classical definition, where ruin occurs the first instant the surplus becomes negative, has been extended in many ways to better reflect the operational realities of an insurance company. The flexibility of the hybrid risk process framework accommodates these modern, more nuanced definitions of financial distress, typically by framing them as first-passage time problems for the underlying process $(J,X)$.

This section illustrates the framework's versatility by showing how it can be used to analyze a wide range of insolvency definitions, from classical ruin to more recent and complex criteria. We will demonstrate how each type of ruin can be formulated as a first-passage time problem, often on an augmented state space, making them amenable to a unified computational approach.

We begin by covering the classical definitions distinguished by the time horizon: the infinite-horizon ruin probability and the finite-horizon ruin probability, which is made tractable by considering an Erlang-distributed time horizon \cite{asmussen2002erlangian}. We then discuss ruin types based on different monitoring schemes, such as Poissonian ruin, where insolvency is triggered only if an observer, arriving at times dictated by a Poisson process, finds the surplus to be negative \cite{albrecher2017strikingly}.

The subsequent examples explore more nuanced definitions of financial distress. We consider cumulative Parisian ruin, which posits that insolvency occurs only if the total accumulated time the surplus spends below zero exceeds a grace period, which can itself be random and follow a phase-type distribution \cite{guerin2017distribution, bladt2019parisian}. We also cover the Omega model, which distinguishes between the event of ruin (a negative surplus) and ultimate bankruptcy, with the latter occurring at a rate dependent on the deficit's severity \cite{albrecher2011gammaomega,gerber2012omega}.

Finally, building on these ideas, we introduce a novel definition which we term Generalized Omega ruin. This framework synthesizes concepts from both cumulative Parisian and Omega ruin into a single, unified model of financial distress. In this setup, once the surplus becomes negative, the insurer is subject to termination, which occurs at a single, state- and level-dependent intensity. The key innovation is that this termination rate can be flexibly constructed to depend simultaneously on factors such as the accumulated duration of the process, the time spent in deficit and the severity of that deficit. This approach allows for a more nuanced description of financial distress and naturally contains the Erlangian, cumulative Parisian and Omega models as special cases.

We analyze different ruin definitions for a general hybrid risk process $(L,R)$. As detailed in Section \ref{sec:hybrid}, this process is constructed from an underlying hybrid SDE, $(J,X)$, which is fully characterized by its drift $\mu(i,x)$, diffusion $\sigma(i,x)$, and the environmental generator $\bm{\Lambda}(x)$, for which we assume the conditions in Assumption \ref{ass:existence_final} hold. We will show how various ruin types can be studied by appropriately modifying these components (primarily the generator $\bm{\Lambda}(x)$ via state-space augmentation) to transform each problem into a standard first-passage time problem for a process with killing. This approach allows us to handle complex ruin conditions within the same computational framework.

\subsection{Infinite-horizon Ruin}

The classical and most fundamental definition of insolvency is the infinite-horizon ruin probability \cite{asmussen2010ruin}. Ruin is said to occur if the surplus process ever falls below zero. For our hybrid risk process \((L,R)\), the time of infinite-horizon ruin is defined as
\[
\tau^\infty = \inf\{ t \ge 0 \,:\, R_t < 0 \},
\]
with the convention that the infimum of an empty set is \(\infty\). While the simplest quantity of interest is the probability of ultimate ruin, our framework allows for the analysis of more detailed descriptors. In particular, we can consider the state of the environment at the time of ruin. We therefore define the infinite-horizon ruin probability, conditional on the initial state \((u,i)\) and the state at ruin \(j\), as
\[
\psi_{ij}^\infty(u) = \mathbb{P}(\tau^\infty < \infty, L_{\tau^\infty} = j \mid R_0 = u, L_0 = i).
\]
The subscript \(j\) provides more detail than just the occurrence of ruin; it describes the manner in which the down-crossing of the zero boundary occurs. For instance, if ruin is caused by a jump, \(j\) denotes the state within the jump-generating space $\mathcal{S}^-$ that the process occupied as it crossed the boundary. Throughout this document, we will use the expression $L_{\tau^\infty} = j$ as a convenient, albeit slight, abuse of notation to carry this information about the cause of ruin. The total probability of ruin from state \(i\) is then obtained by summing these quantities over all possible states \(j\) at ruin.

This represents the most straightforward type of ruin to analyze within our framework. The ruin probability \(\psi_{ij}^\infty(u)\) can be computed directly by applying the computational methods discussed in Section \ref{sec:ruin-hybrid}, which are based on the results for first-passage times in \cite{albrecher2022space}. In contrast, the more complex ruin definitions presented in the following subsections are handled by first modifying the underlying hybrid risk process, typically through state-space augmentation, to transform the problem into an equivalent infinite-horizon ruin problem.

\subsection{Erlangian-horizon Ruin}

While the infinite-horizon ruin probability is of theoretical importance, for practical purposes such as solvency regulation, the probability of ruin within a fixed, finite time horizon, \(\psi(u,T)\), is often the quantity of interest. However, direct calculation of \(\psi(u,T)\) is notoriously difficult for most risk models. A powerful and tractable alternative is to approximate the deterministic time horizon \(T\) with an independent, random time horizon \(H\) that follows an Erlang distribution \cite{asmussen2002erlangian}. An Erlang random variable \(H\) with \(m\) stages and rate \(\lambda\), having mean \(\mathbb{E}[H] = m/\lambda\), becomes increasingly concentrated around its mean as the number of stages \(m\) increases. By setting the mean to the desired time horizon, \(T = m/\lambda\), and letting \(m \to \infty\), the probability of ruin before time \(H\) serves as an excellent approximation for \(\psi(u,T)\).

To analyze this problem, we must define two related quantities. Let \(H\) be an Erlang-distributed time horizon independent of the risk process \((L,R)\). The first descriptor is the probability of ruin occurring before the Erlang clock rings. We define this probability, which also keeps track of the environment's state at ruin, as
\[
\psi_{ij}^H(u) = \mathbb{P}(\tau^\infty < H, L_{\tau^\infty} = j \mid R_0 = u, L_0 = i).
\]
The second descriptor is the density of the surplus at the horizon time \(H\), given that ruin has not occurred. This tells us where the process is upon survival, and is defined by the density
\[
\psi_{ij}^H(u, y) \dd y = \mathbb{P}(\tau^\infty \ge H, R_H \in (y, y+\dd y), L_H = j \mid R_0 = u, L_0 = i).
\]
The key to calculating these quantities is to convert the finite-horizon problem into an infinite-horizon problem on an augmented state space. This is done by incorporating the Erlang clock into the state space of the underlying environmental process \(J\). Let the original process \(J\) have state space \(\mathcal{S} = \mathcal{S}^p \cup \mathcal{S}^+ \cup \mathcal{S}^-\) and level-dependent intensity matrix \(\bm{\Lambda}(x)\). Let the Erlang horizon \(H\) have \(m\) stages and rate \(\lambda\), corresponding to a phase-type representation with a subintensity matrix \(\bm{K}\) of size \(m \times m\), where \(\bm{K}\) has \(-\lambda\) on the diagonal and \(\lambda\) on the first super-diagonal.

A crucial subtlety arises from the time-change construction of our hybrid risk process. The Erlang clock \(H\) measures the operational time \(t\) of the process \((L,R)\), which only runs when the underlying environmental process \(J\) is in a premium-paying state \(i \in \mathcal{S}^p\). The clock must be paused during the sojourns in the jump-generating states \(\mathcal{S}^+\) and \(\mathcal{S}^-\). Therefore, the generator for the augmented process must be constructed on a block basis.

The augmented state space is the product space \(\mathcal{S} \times \{1, \dots, m\}\). We now define a new underlying hybrid SDE, \((J^H, X^H)\), on this augmented space. The new environmental process \(J^H\) has states \((i,k) \in \mathcal{S} \times \{1, \dots, m\}\). The continuous component \(X^H\) is identical to the original, i.e., \(X^H_s=X_s\) up to the termination of $X^H$ (more details in Remark \ref{Rem:temination-Erlang}). Since the Erlang clock does not directly influence the surplus dynamics, the drift and diffusion coefficients for the augmented process simply ignore the clock's state, and thus,
\[
\mu^H((i,k), x) = \mu(i,x) \quad \text{and} \quad \sigma^H((i,k), x) = \sigma(i,x),\quad i\in\mathcal{S}, k\in \{1, \dots, m\}, x\in\mathds{R}.
\]
The problem of finite-horizon ruin for the original process \((L,R)\) is then equivalent to the problem of infinite-horizon ruin for a new hybrid risk process \((L^H, R^H)\). This new process is obtained by applying the time-change construction from Section \ref{sec:hybrid} to the augmented SDE \((J^H, X^H)\). The generator for the environmental component \(J^H\) is the matrix \(\bm{\Lambda}^H(x)\), which we construct next.

The new intensity matrix \(\bm{\Lambda}^H(x)\) is constructed as follows, leveraging the properties of the Kronecker sum and product.
\begin{itemize}
    \item \textbf{Concurrent Evolution}: When two independent Markov processes run in parallel, the generator of the joint process is the Kronecker sum (\(\oplus\)) of their individual generators. This is the case when \(J(t) \in \mathcal{S}^p\), as the risk environment and the Erlang clock evolve simultaneously.
    \item \textbf{Paused Evolution}: When one process evolves while the other is frozen, the generator of the joint process involves the Kronecker product (\(\otimes\)) with an identity matrix. This is used to pause the Erlang clock when \(J(t)\) enters \(\mathcal{S}^+\) or \(\mathcal{S}^-\), ensuring the clock's stage is remembered but not advanced.
\end{itemize}
Combining these principles, the generator for the environmental process \(J^H\) is given by the block matrix with the following structure.
\[
\bm{\Lambda}^H(x) =
\begin{pmatrix}
\bm{\Lambda}_{\mathcal{S}^p\mathcal{S}^p}(x) \oplus \bm{K} & \bm{\Lambda}_{\mathcal{S}^p\mathcal{S}^+}(x) \otimes \bm{I}_m & \bm{\Lambda}_{\mathcal{S}^p\mathcal{S}^-}(x) \otimes \bm{I}_m \\
\bm{\Lambda}_{\mathcal{S}^+\mathcal{S}^p}(x) \otimes \bm{I}_m & \bm{\Lambda}_{\mathcal{S}^+\mathcal{S}^+}(x) \otimes \bm{I}_m & \bm{0} \\
\bm{\Lambda}_{\mathcal{S}^-\mathcal{S}^p}(x) \otimes \bm{I}_m & \bm{0} & \bm{\Lambda}_{\mathcal{S}^-\mathcal{S}^-}(x) \otimes \bm{I}_m
\end{pmatrix},
\]
where \(\bm{\Lambda}_{\mathcal{S}^p\mathcal{S}^p}(x) \oplus \bm{K} = \bm{\Lambda}_{\mathcal{S}^p\mathcal{S}^p}(x) \otimes \bm{I}_m + \bm{I}_{|\mathcal{S}^p|} \otimes \bm{K}\).

\begin{remark}\label{Rem:temination-Erlang}
Note that because the Erlang generator \(\bm{K}\) is a subintensity matrix, the resulting augmented matrix \(\bm{\Lambda}^H(x)\) is also a subintensity matrix. The defect in the row sums corresponds to the intensity of the Erlang clock being absorbed, which signifies that the time horizon \(H\) has been reached. This absorption of the clock component effectively terminates the augmented process \((L^H, R^H)\). Formally, this termination can be viewed in two equivalent ways. One is to consider the process as being ``killed'' and sent to an implicit cemetery state, which is the standard interpretation of a subintensity matrix. The other is to explicitly augment the state space further with an absorbing ``horizon'' state. For ease of exposition in this and the following subsections, we adopt the first viewpoint, working with subintensity matrices and treating termination events as absorption into an implicit cemetery state.

It is crucial to analyze the meaning of this termination on a case-by-case basis for each ruin definition. In the present context of Erlangian-horizon ruin, absorption by the clock's exit rates means the time horizon \(H\) has been reached before classical ruin at level zero. Therefore, if this killing occurs while the surplus \(R^H\) has remained in \([0, \infty)\), the probability mass associated with the defect contributes to the non-ruin event described by \(\psi_{ij}^H(u, y)\). As we will see, for other models like Poissonian or cumulative Parisian ruin, the defect in the generator will represent the ruin event itself. The interpretation of the subintensity matrix's defect is thus specific to the problem being modeled.
\end{remark}

The analysis of the Erlang-horizon ruin time begins by setting the initial state of the augmented process. If the original process $(L,R)$ starts with surplus \(u\) and in environment state \(i \in \mathcal{S}\), the augmented process $(L^H,R^H)$ starts in state \((i,1)\) with the same surplus, reflecting that the Erlang clock begins in its first stage. The ruin descriptors are then recovered by analyzing the competition between ruin at level 0 and absorption by the clock. The probability of ruin before the horizon, \(\psi_{ij}^H(u)\), is the probability that the process \((L^H, R^H)\) hits level 0 before the clock component is absorbed. The distribution of the surplus at the horizon, \(\psi_{ij}^H(u, y)\), is the distribution of the surplus at the moment of this absorption (which occurs with intensity \(\lambda\) from any state \((i,m)\)), conditioned on it happening before ruin. With this construction, the Erlangian-horizon problem is transformed into a standard first-passage problem for a process with killing, which can be solved with the computational machinery for our framework.

\subsection{Poissonian Ruin}

The concept of Poissonian ruin moves away from the assumption of continuous monitoring of the surplus process. Instead, insolvency is only declared if the surplus is found to be negative at specific, discrete moments in time. This framework, which has been explored in various contexts such as bridging discrete and continuous-time models and analyzing general Lévy processes \cite{albrecher2016exit, albrecher2017strikingly}, is particularly relevant for modeling regulatory oversight or audits. In this setup, the observation epochs are modeled as the arrival times of an independent Poisson process with some rate \(\lambda>0\).

Let \(\{H_n\}_{n \ge 1}\) be the sequence of arrival times of such a Poisson process. The time of Poissonian ruin, \(\tau^P\), is the first observation time at which the surplus process \(R\) is negative. Formally, this is defined as
\[
\tau^P = \inf\{ H_n \,:\, n \ge 1, R_{H_n} < 0 \}.
\]
In addition to the probability of ruin itself, it is often of interest to analyze the severity of ruin, i.e., the deficit at the time of insolvency. We can therefore define a more detailed descriptor that captures both the occurrence of ruin and the size of the deficit. The Poissonian ruin function is then defined by the density
\[
\psi_i^P(u, y) \dd y = \mathbb{P}(\tau^P < \infty, -R_{\tau^P} \in (y, y+\dd y) \mid R_0 = u, L_0 = i),
\]
which represents the probability of ruin occurring with a deficit of approximately \(y\), given an initial surplus \(u\) and environmental state \(i\).

This type of ruin is handled by analyzing a killed version of the original process. Following the convention established in the previous sections, we describe the dynamics of the process prior to ruin through a level-dependent subintensity matrix. To formalize this, we define a new hybrid risk process, \((L^P, R^P)\), which evolves identically to \((L,R)\) but is subject to this new termination rule. This process is generated from an underlying hybrid SDE, \((J^P, X^P)\), whose drift and diffusion coefficients are unchanged, i.e., \(\mu^P = \mu\) and \(\sigma^P = \sigma\). The key difference lies in the dynamics of the environmental process \(J^P\), which are governed by a new level-dependent subintensity matrix \(\bm{\Lambda}^P(x)\) that incorporates the killing effect of the Poisson observations.

An observation arrives at rate \(\lambda\), but this observation clock is only active when the underlying environment \(J(t) \in \mathcal{S}^p\). Ruin occurs if an observation finds the surplus to be negative. Therefore, the process is subject to a killing intensity that is non-zero only under these specific conditions. This logic defines the subintensity matrix \(\bm{\Lambda}^P(x)\) as follows.
\begin{itemize}
    \item For \(x \ge 0\), no observation can trigger ruin. Thus, the process evolves without killing, and the generator is the original intensity matrix, \(\bm{\Lambda}^P(x) = \bm{\Lambda}(x)\).
    \item For \(x < 0\), an observation while in a state \(i \in \mathcal{S}^p\) triggers ruin. The process is therefore killed with intensity \(\lambda\). This is reflected by modifying the diagonal entries for these states such that \(\bm{\Lambda}^P_{ii}(x) = \bm{\Lambda}_{ii}(x) - \lambda\). For states \(i \notin \mathcal{S}^p\), the clock is paused, so there is no killing, and the corresponding rows of \(\bm{\Lambda}^P(x)\) are identical to those of \(\bm{\Lambda}(x)\).
\end{itemize}
This construction reframes the problem. We are no longer solving a first-passage time problem to the boundary at zero, but are instead interested in the state of the process \((L^P, R^P)\) at the moment of termination. This termination can only occur while the surplus \(R^P_t\) is in the region \((-\infty, 0)\). The quantity we seek, \(\psi_i^P(u, y)\), is precisely the density of the process's location within this negative region at the random time of killing. The probability mass associated with the defect in \(\bm{\Lambda}^P(x)\) for \(x<0\) directly contributes to this ruin function. Calculating this density can be achieved using an adaptation of the methods in \cite{albrecher2022space}, as will be detailed further in Section \ref{sec:ruin-hybrid}.

\subsection{Cumulative Parisian Ruin}
In this section, we analyze a more nuanced definition of insolvency that grants an insurer a degree of resilience to transient deficits. Instead of declaring ruin the instant the surplus becomes negative, we envision the insurer possessing a ``solvency allowance'' that is depleted over time while the surplus is negative. This allowance is modeled as a single, random period of time, which we assume follows a flexible phase-type distribution.

The mechanism works as follows: a timer, corresponding to this allowance, starts running whenever the surplus process is below zero. If the surplus recovers and becomes non-negative, this timer simply pauses. It resumes from where it left off if the surplus drops below zero again at a later stage. Ruin is only declared if the total accumulated time spent in deficit completely exhausts the initial random allowance. This concept is known in the literature as cumulative Parisian ruin \cite{guerin2017distribution, bladt2019parisian}. It is particularly well-suited to our hybrid risk process framework because the ``on/off'' nature of the timer can be modeled smoothly by a level-dependent generator.

To analyze this type of ruin, we first define the quantities of interest. Let \(A_t = \int_0^t \mathds{1}_{\{R_s < 0\}} \dd s\) be the total time the surplus process has spent in deficit by time \(t\), and let \(H\) be the random grace period drawn from a phase-type distribution \(\mbox{PH}(\bm{\kappa}, \bm{K})\). The time of cumulative Parisian ruin, \(\tau^C\), is the first time \(t\) that this accumulated time exceeds the grace period, i.e., \(\tau^C = \inf\{t \ge 0 \,:\, A_t \ge H\}\). Our goal is to compute the density of the deficit at this time, given by
\[
\psi_i^C(u, y) \dd y = \mathbb{P}(\tau^C < \infty, -R_{\tau^C} \in (y, y+\dd y) \mid R_0 = u, L_0 = i),
\]
which represents the probability of ruin occurring with a deficit of approximately \(y\).

To formalize this, we define a new underlying hybrid SDE, \((J^C, X^C)\), on an augmented state space. The augmented environmental process \(J^C\) has states in \(\mathcal{S} \times \{1, \dots, m\}\), tracking both the original environment and the \(m\) phases of the grace-period clock. The continuous component \(X^C\) is identical to the original \(X\), and the drift and diffusion coefficients are unaffected by the clock's state, i.e., \(\mu^C((i,k), x) = \mu(i,x)\) and \(\sigma^C((i,k), x) = \sigma(i,x)\). The cumulative Parisian ruin problem is then studied by analyzing the new hybrid risk process \((L^C, R^C)\), which results from applying the time-change to \((J^C, X^C)\). The dynamics of \(J^C\) are governed by the level-dependent generator \(\bm{\Lambda}^C(x)\) described below.

The generator \(\bm{\Lambda}^C(x)\) changes its structure depending on the sign of the surplus \(x\), which we detail next.
\begin{itemize}
    \item For \(x < 0\), the clock is active and runs on the operational time of the risk process. This means it advances when the underlying environment \(J(t) \in \mathcal{S}^p\) and is frozen during sojourns in the jump-generating states \(\mathcal{S}^+\) or \(\mathcal{S}^-\). Analogous to the Erlangian-horizon construction, this is achieved with a block matrix generator,
    \[
    \bm{\Lambda}^C(x) =
    \begin{pmatrix}
    \bm{\Lambda}_{\mathcal{S}^p\mathcal{S}^p}(x) \oplus \bm{K} & \bm{\Lambda}_{\mathcal{S}^p\mathcal{S}^+}(x) \otimes \bm{I}_m & \bm{\Lambda}_{\mathcal{S}^p\mathcal{S}^-}(x) \otimes \bm{I}_m \\
    \bm{\Lambda}_{\mathcal{S}^+\mathcal{S}^p}(x) \otimes \bm{I}_m & \bm{\Lambda}_{\mathcal{S}^+\mathcal{S}^+}(x) \otimes \bm{I}_m & \bm{0} \\
    \bm{\Lambda}_{\mathcal{S}^-\mathcal{S}^p}(x) \otimes \bm{I}_m & \bm{0} & \bm{\Lambda}_{\mathcal{S}^-\mathcal{S}^-}(x) \otimes \bm{I}_m
    \end{pmatrix},
    \]
    where \(\bm{\Lambda}_{\mathcal{S}^p\mathcal{S}^p}(x) \oplus \bm{K} = \bm{\Lambda}_{\mathcal{S}^p\mathcal{S}^p}(x) \otimes \bm{I}_m + \bm{I}_{|\mathcal{S}^p|} \otimes \bm{K}\).
    \item For \(x \ge 0\), the clock is frozen. The environmental process \(L\) continues to evolve according to its own dynamics, but the clock's phase \(k\) does not change. The generator for this regime is given by the Kronecker product of the original generator with an identity matrix,
    \[
    \bm{\Lambda}^C(x) = \bm{\Lambda}(x) \otimes \bm{I}_m.
    \]
\end{itemize}
This construction results in a process governed by a subintensity matrix \(\bm{\Lambda}^C(x)\) which has a non-zero defect only when \(x<0\). This defect, arising from the exit rates of the PH generator \(\bm{K}\), corresponds to the intensity of cumulative Parisian ruin. The problem is thus transformed into analyzing the location of the process at the time of this killing. It is insightful to consider the special case where the grace period is exponentially distributed with rate \(\lambda\). This corresponds to a phase-type representation with \(m=1\), \(\bm{\kappa}=1\), and \(\bm{K} = [-\lambda]\). In this scenario, the generator \(\bm{\Lambda}^C(x)\) for \(x < 0\) simplifies, and the resulting model becomes equivalent to the Poissonian ruin model discussed in the previous section.

\begin{remark}
It is worth distinguishing this from standard Parisian ruin \cite{dassios2008parisian}, where the grace-period clock resets at the beginning of each new excursion below zero. Modeling the latter concept does not fit as naturally into our framework because it requires forcing a jump in the environmental process (to re-initialize the clock according to \(\bm{\kappa}\)) immediately upon a down-crossing of the zero boundary. Such a boundary-triggered jump is not a feature of the class of hybrid SDEs considered in this paper. While standard Parisian ruin can certainly be analyzed using related techniques, as in \cite{bladt2019parisian}, the required methodology is somewhat different from the continuous, level-dependent generator approach that defines the flavor of this paper.
\end{remark}

\subsection{Omega Ruin}

The Omega model further refines the notion of financial distress by distinguishing between ruin (the event of the surplus becoming negative) and bankruptcy (the ultimate failure of the insurer) \cite{albrecher2011gammaomega,gerber2012omega,li2018fluctuations}. This is motivated by the observation that an insurance company may continue to operate for some time with a negative surplus. In this framework, bankruptcy does not occur at the instant the surplus drops below zero, but is rather triggered by an external shock that arrives at a rate depending on the current state of the insurer's reserve.

More formally, when the surplus process \(R_t\) is negative, the insurer is exposed to the risk of bankruptcy, which can be modeled as the first arrival of an inhomogeneous Poisson process. The intensity of this process, \(\omega\), is a key feature of the model and can depend on both the current environmental state, \(L_t = i\), and the current surplus level, \(R_t = x\), so that the bankruptcy rate is \(\omega(i,x)\). This allows for a very flexible and realistic setup; for instance, the intensity of bankruptcy could increase as the deficit becomes deeper or if the economic environment deteriorates. This type of ``Omega-killed'' process has been analyzed in detail for Markov additive processes \cite{czarna2018omega}.

Formally, the time of Omega ruin, \(\tau^\Omega\), is the first event time of an inhomogeneous Poisson process whose intensity at time \(t\) is given by \(\omega(L_t, R_t)\) if \(R_t<0\) and is zero otherwise. Our objective is to determine the density of the deficit at this random time of bankruptcy, which we define as
\[
\psi_i^\Omega(u, y) \dd y = \mathbb{P}(\tau^\Omega < \infty, -R_{\tau^\Omega} \in (y, y+\dd y) \mid R_0 = u, L_0 = i).
\]
This quantity describes the probability of the insurer going bankrupt with a deficit of approximately \(y\).

To analyze this, we follow our established convention and model the problem by analyzing a killed version of the original process. We define a new hybrid risk process, \((L^\Omega, R^\Omega)\), which evolves identically to \((L,R)\) but is subject to termination by bankruptcy. This process is generated from an underlying hybrid SDE, \((J^\Omega, X^\Omega)\), whose drift and diffusion coefficients are unchanged (\(\mu^\Omega = \mu\), \(\sigma^\Omega = \sigma\)). The dynamics of the environmental process \(J^\Omega\) are governed by a new level-dependent subintensity matrix, \(\bm{\Lambda}^\Omega(x)\), that incorporates the bankruptcy intensity \(\omega(i,x)\).

The construction of \(\bm{\Lambda}^\Omega(x)\) depends on the sign of the surplus, which is detailed next.
\begin{itemize}
    \item For \(x \ge 0\), the insurer is not exposed to Omega ruin. The process evolves without killing, and the generator is the original intensity matrix, \(\bm{\Lambda}^\Omega(x) = \bm{\Lambda}(x)\).
    \item For \(x < 0\), the insurer is exposed to bankruptcy, but this risk is only active during the process's operational time. The process is therefore killed with the state- and level-dependent intensity \(\omega(i,x)\) only if the environment is in a premium-paying state \(i \in \mathcal{S}^p\). For states in \(\mathcal{S}^+\) or \(\mathcal{S}^-\), the bankruptcy clock is paused. This is reflected by modifying the diagonal entries for states in \(\mathcal{S}^p\) such that \(\bm{\Lambda}^\Omega_{ii}(x) = \bm{\Lambda}_{ii}(x) - \omega(i,x)\), while the rows for states not in \(\mathcal{S}^p\) are left unchanged.
\end{itemize}
This construction reframes the Omega ruin problem. We are no longer solving for a first-passage time to zero, but are interested in the distribution of the surplus at the random time of bankruptcy. The probability mass associated with the defect in \(\bm{\Lambda}^\Omega(x)\) for \(x<0\) directly corresponds to this bankruptcy event, and its analysis falls within our computational framework which will be developed in Section \ref{sec:ruin-hybrid}. It is worth noting that this model directly generalizes the Poissonian ruin case; the latter is recovered simply by setting the bankruptcy intensity to a constant \(\omega(i,x) = \lambda\) for all \(i \in \mathcal{S}^p\).

\subsection{Generalized Omega Ruin}
We now introduce a general framework, which can be understood as a two-sided, level-dependent, and state-dependent termination model. The core idea is that the process evolves according to two distinct level-dependent subintensity matrices: \(\bm{\Lambda}_+(x)\) for when the surplus is non-negative (\(x \ge 0\)), and \(\bm{\Lambda}_-(x)\) for when the surplus is negative (\(x < 0\)).

Unlike the previous models where termination was a single killing rate subtracted from the diagonal, here we envision a more general mechanism. These subintensity matrices are constructed from the original generator \(\bm{\Lambda}(x)\) by ``reweighing'' both its off-diagonal and diagonal entries. A portion of each transition intensity from state \(i\) to \(j\) can be redirected to a termination event, and additionally, a direct, state-dependent killing rate can be applied. This implies the process is constantly under competing forces of transition and termination, arising from multiple sources.

Formally, we define a new hybrid risk process, \((L^G, R^G)\), generated from an underlying SDE, \((J^G, X^G)\), with unchanged drift and diffusion coefficients. The dynamics of the environmental process \(J^G\) are governed by the piecewise generator \(\bm{\Lambda}^G(x)\). A key modeling assumption, consistent with our treatment of all time-dependent ruin conditions, is that termination events can only be triggered during the process's operational time. Therefore, the reweighing functions that generate termination are defined to be non-zero only for states \(i \in \mathcal{S}^p\). The generator is constructed as follows.
\begin{itemize}
    \item For \(x \ge 0\), we set \(\bm{\Lambda}^G(x) = \bm{\Lambda}_+(x)\). For a premium-paying state \(i \in \mathcal{S}^p\), its corresponding row in \(\bm{\Lambda}_+(x)\) is constructed using non-negative reweighing functions \(\omega^+_{ij}(x) \in [0,\Lambda_{ij}(x)]\) (\(i \neq j\)) and \(\omega^+_{ii}(x) \ge 0\), leading to
    \begin{align*}
    \Lambda_{+,ij}(x) &= \Lambda_{ij}(x) - \omega^+_{ij}(x) \quad \text{for } j \in\mathcal{S}^p.
    \end{align*}
    For any state \(i \notin \mathcal{S}^p\), its row in \(\bm{\Lambda}_+(x)\) is identical to the corresponding row in \(\bm{\Lambda}(x)\). The resulting killing rate is \(k^+_i(x) = \sum_{j \in\mathcal{S}^p} \omega^+_{ij}(x)\) if \(i \in \mathcal{S}^p\) and zero otherwise. This termination corresponds to a non-ruin event.
    \item For \(x < 0\), we set \(\bm{\Lambda}^G(x) = \bm{\Lambda}_-(x)\). This matrix is constructed in an analogous way, using reweighing functions \(\omega^-_{ij}(x)\) and \(\omega^-_{ii}(x)\) that may be non-zero only for \(i \in \mathcal{S}^p\). The resulting killing rate, \(k^-_i(x)\), corresponds to a ruin event.
\end{itemize}
This framework allows us to analyze the competing risks inherent in the model by defining four key descriptors. Let \(\tau_0 = \inf\{t \ge 0 \,:\,R^G_t < 0\}\) be the time of first down-crossing. The random termination times, \(\tau_+\) and \(\tau_-\), are defined as the first time the augmented environmental process, \(L^G\), leaves its transient state space \(\mathcal{S}_G\). We distinguish between the two based on the sign of the surplus at that moment with
\[
\tau_+ = \inf\{t \ge 0 \,:\, L^G_t \notin \mathcal{S}_G \text{ and } R^G_t \ge 0\} \quad \text{and} \quad \tau_- = \inf\{t \ge 0 \,:\, L^G_t \notin \mathcal{S}_G \text{ and } R^G_t < 0\}.
\]
Here \(\tau_+\) represents non-ruin termination, while \(\tau_-\) represents ruin by termination. These quantities are best understood by grouping them into two distinct competitions.

First, to analyze the competition on the non-negative half-line, we define the following.
\begin{enumerate}
    \item Define the density of the surplus at the first non-ruin termination that precedes any down-crossing,
    \[
    \psi_{ij}^{+<0}(u, y) \dd y = \mathbb{P}(\tau_+ < \tau_0, R^G_{\tau_+} \in (y, y+\dd y), L^G_{\tau_+} = j \mid R^G_0=u, L^G_0=i), \quad y \ge 0.
    \]
    \item The probability that the process down-crosses zero before being terminated on the positive half-line, we define
    \[
    \psi_{ij}^{0<+}(u) = \mathbb{P}(\tau_0 < \tau_+, L^G_{\tau_0} = j \mid R^G_0=u, L^G_0=i).
    \]
\end{enumerate}
Second, to analyze the ultimate competition between termination on the positive versus the negative half-line, we consider the following.
\begin{enumerate}
\setcounter{enumi}{2}
    \item The density of the deficit at the time of a ruin-by-termination event, given that this occurs before any non-ruin termination,
    \[
    \psi_{ij}^{-<+}(u, y) \dd y = \mathbb{P}(\tau_- < \tau_+, -R^G_{\tau_-} \in (y, y+\dd y), L^G_{\tau_-} = j \mid R^G_0=u, L^G_0=i), \quad y > 0.
    \]
    \item The density of the surplus at the time of a non-ruin termination, given that this occurs before any ruin-by-termination event,
    \[
    \psi_{ij}^{+<-}(u, y) \dd y = \mathbb{P}(\tau_+ < \tau_-, R^G_{\tau_+} \in (y, y+\dd y), L^G_{\tau_+} = j \mid R^G_0=u, L^G_0=i), \quad y \ge 0.
    \]
\end{enumerate}
Together, this set of four descriptors provides a comprehensive toolkit for analyzing the various competing risks within the generalized framework.

The framework allows us to unify all previously discussed ruin concepts and, more powerfully, to model the competition between them. To achieve this, we can define both \(\bm{\Lambda}_+(x)\) and \(\bm{\Lambda}_-(x)\) to be non-trivial subintensity matrices. For example, to calculate the probability of ruin versus surviving until a finite time horizon, one could define \(\bm{\Lambda}_+(x)\) based on an Erlang clock augmentation. The ruin-causing matrix \(\bm{\Lambda}_-(x)\) can, in turn, model various scenarios. For cumulative Parisian ruin, it would be constructed by augmenting the state space with a PH grace-period clock. For the classic Omega model, the construction is more direct: \(\bm{\Lambda}_-(x)\) is formed simply by setting the killing intensity for each state \(i \in \mathcal{S}^p\) to be the bankruptcy rate \(\omega(i,x)\), without further state augmentation.

The most powerful application of the framework is modeling the competition between multiple ruin and non-ruin terminations simultaneously. As a premier example, we construct a model that incorporates Omega ruin and cumulative Parisian ruin within a finite time horizon. This requires augmenting the environmental state space \(\mathcal{S}\) with two clocks: a non-ruin clock for the time horizon (e.g., Erlang with \(m_N\) phases and generator \(\bm{K}_N\)) and a ruin clock for the grace period (e.g., a phase-type clock with \(m_R\) phases and generator \(\bm{K}_R\)). The full state space for the environmental process becomes \(\mathcal{S} \times \{1, \dots, m_N\} \times \{1, \dots, m_R\}\).

Complex scenarios might involve augmenting the environmental state space with both a non-ruin and a ruin clock simultaneously, allowing for a direct analysis of, for example, cumulative Parisian ruin versus survival to an Erlang horizon, on top of a base Omega-style ruin model. To construct the generator \(\bm{\Lambda}^G(x)\) for this doubly-augmented process, we first define a base generator \(\bm{\Lambda}^\Omega(x)\) that already incorporates the direct Omega killing: \(\bm{\Lambda}^\Omega_{ii}(x) = \bm{\Lambda}_{ii}(x) - \omega(i,x)\) for \(i \in \mathcal{S}^p, x<0\), and \(\bm{\Lambda}^\Omega(x) = \bm{\Lambda}(x)\) otherwise. We then add the phase-type clocks for the time horizon (\(\bm{K}_N\)) and grace period (\(\bm{K}_R\)). The structure of \(\bm{\Lambda}^G(x)\) depends on the sign of the surplus, which we detail next.
\begin{itemize}
    \item For \(x \ge 0\), the horizon clock \(\bm{K}_N\) is active, but the Parisian clock \(\bm{K}_R\) is frozen. The generator for the premium-paying states is \(\left(\bm{\Lambda}_{\mathcal{S}^p\mathcal{S}^p}(x) \oplus \bm{K}_N\right) \otimes \bm{I}_{m_R}\). The only source of termination is absorption from \(\bm{K}_N\), corresponding to a non-ruin event (reaching the horizon).
    \item For \(x < 0\), the process evolves according to the base subintensity matrix \(\bm{\Lambda}^\Omega(x)\), while both the horizon clock \(\bm{K}_N\) and the Parisian clock \(\bm{K}_R\) are also active. The generator for the premium-paying states is therefore given by the full parallel evolution \(\bm{\Lambda}_{\mathcal{S}^p\mathcal{S}^p}^\Omega(x) \oplus \bm{K}_N \oplus \bm{K}_R\).
\end{itemize}
The defect of the generator for \(x<0\) has three components: the base Omega killing rate \(\omega(i,x)\), the absorption rate from the Parisian clock \(\bm{K}_R\), and the absorption rate from the horizon clock \(\bm{K}_N\). The first two events represent ruin and can be modeled as absorption into a  ruin state \(\partial_R\). In contrast, absorption from the horizon clock \(\bm{K}_N\) (whether for \(x \ge 0\) or \(x < 0\)) represents survival to the time limit and corresponds to absorption into a non-ruin state \(\partial_N\). This detailed construction allows for the direct analysis of these competing risks. The four quantities \(\psi^{+<0}\), \(\psi^{0<+}\), \(\psi^{-<+}\), and \(\psi^{+<-}\) are precisely the descriptors that quantify the probabilities and distributions associated with each possible outcome. The calculation of this complete set of descriptors can be addressed within the computational framework developed in Section \ref{sec:ruin-hybrid}.

\section{Ruin Descriptors for Hybrid Risk Processes}\label{sec:ruin-hybrid}

Having established the hybrid risk process framework, we now outline the computational procedure for its key descriptors. A notable feature of our approach is that the various ruin problems discussed in Section \ref{sec:ruin-examples} can be analyzed within a unified computational scheme. This procedure, based on the framework for multi-regime Markov-modulated Brownian motions (MMBMs) developed in \cite{albrecher2022space}, involves analyzing { its underlying hybrid SDE solution $(J^G,X^G)$} on a generic finite interval $[c,d]$, where $c \le 0 < d$.

The core idea is to compute the Laplace transforms of stopping time distributions, which is equivalent to analyzing a version of the process that is subject to several competing risks. The process can be absorbed at the boundaries $c$ and $d$. Additionally, it faces internal termination events corresponding to non-ruin outcomes (type `$+$', with state-dependent rate $k_i^+(x)$), ruin outcomes (type `$-$', with state-dependent rate $k_i^-(x)$), and an overall process killing by an independent exponential clock, $e_q$, with rate $q \ge 0$.

A key modeling assumption, consistent with the time-change construction, is that all of these internal killing events are only active during the process's operational time—that is, when the environmental process { of the underlying SDE, $J^G$,} is in a premium-paying state $i \in \mathcal{S}^p$. The overall stopping time is therefore the first of these events to occur, namely, $\tau = \tau_c \wedge \tau_d \wedge \tau_+ \wedge \tau_- \wedge e_q$. This framework is implemented by modifying the process generator. For any level $x$, the generator of the full competing risks process, $\bm{\Lambda}^{G,q}(x)$, is obtained by taking the original generator $\bm{\Lambda}^G(x)$ and subtracting all active killing rates from the diagonal entries of the premium-paying states. For each $i \in \mathcal{S}^p$, the corresponding diagonal entry becomes $\Lambda^{G,q}_{ii}(x) = \Lambda^G_{ii}(x) - q$, while all other matrix entries are unchanged.

This general setup allows us to compute a comprehensive set of descriptors for the hybrid risk process $(L^G, R^G)$, which specialize to the cases presented earlier. { To compute these quantities, we follow a multi-step approach: the problem is first translated to the underlying SDE $(J^G, X^G)$; this SDE is then approximated by a tractable MMBM $(\hat{J}^G, \hat{X}^G)$, which is finally analyzed using an auxiliary queueing process. The target quantities for $(L^G, R^G)$ are defined as follows:}
\begin{itemize}
    \item The expected discounted occupation time in state $j$ over a sub-interval $(a,b)$,
    \[
    O_{ij}(u,q;c,d;a,b)=\mathbb{E}\left(\int_{0}^{\tau}\mathds{1}\{L^G(s)=j, a < R^G(s) \le b\}\dd s \mid i,u \right).
    \]
    \item The probability of hitting the lower boundary $0$ before non-ruin termination,
    \[
    \psi_{ij}^{0 < +}(u, q; d) = \mathbb{P}(\tau_0 < \tau_d \wedge \tau_+ \wedge e_q, L^G_{\tau_0} = j \mid i,u).
    \]
    \item The density of the surplus at the time of a non-ruin termination occuring before hitting the lower boundary 0,
    \[
    \psi_{ij}^{+<0}(u, q; d, y) \dd y = \mathbb{P}(\tau_+ < \tau_0 \wedge \tau_d \wedge e_q, R^G_{\tau_+} \in (y, y+\dd y), L^G_{\tau_+} = j \mid i,u).
    \]
    \item The density of the deficit at the time of a ruin-by-termination event,
    \[
    \psi_{ij}^{-<+}(u, q; c,d, y) \dd y = \mathbb{P}(\tau_- < \tau_c \wedge \tau_d \wedge \tau_+ \wedge e_q, -R^G_{\tau_-} \in (y, y+\dd y), L^G_{\tau_-} = j \mid i,u).
    \]
    \item The density of the surplus at the time of a non-ruin termination,
    \[
    \psi_{ij}^{+<-}(u, q; c,d, y) \dd y = \mathbb{P}(\tau_+ < \tau_c \wedge \tau_d \wedge \tau_- \wedge e_q, R^G_{\tau_+} \in (y, y+\dd y), L^G_{\tau_+} = j \mid i,u).
    \]
\end{itemize}
Note that the descriptors in Section \ref{sec:ruin-examples} are recovered in the limit as $q \to 0$, $c\to-\infty$, and $d\to\infty$.

The computational procedure involves two main steps: approximation and analysis. First, the continuous functions of the SDE $(J^G,X^G)$, $\mu(i,\cdot)$, $\sigma(i,\cdot)$, and the generator $\bm{\Lambda}^G(\cdot)$ are approximated by piecewise constant functions $\hat{\mu}(i,\cdot)$, $\hat{\sigma}(i,\cdot)$, and $\hat{\bm{\Lambda}}^G(\cdot)$ over a discrete space-grid. As established in \cite{albrecher2022space}, this approximation converges pathwise to the true process under the following conditions.

\begin{assumption}[Convergence of the Approximation Scheme] Assume that $\hat{\mu}(i,\cdot)$, $\hat{\sigma}(i,\cdot)$, and $\hat{\bm{\Lambda}}^G(\cdot)$ depend on an implicit parameter and have $\mu(i,\cdot)$, $\sigma(i,\cdot)$, and the generator $\bm{\Lambda}^G(\cdot)$ as their pointwise limits as that parameter goes to infinity.
\label{ass:convergence_final}
\begin{enumerate}
    \item[A3.] The approximation errors for the drift and diffusion functions, $\sup_{x\in\mathbb{R}}|\mu(i,x) - \hat{\mu}(i,x)|$ and $\sup_{x\in\mathbb{R}}|\sigma(i,x) - \hat{\sigma}(i,x)|$, decrease at a specified polynomial rate as the grid becomes finer. 
    \item[A4.] The approximation error for the intensity matrix, $\sup_{x\in\mathbb{R}}||\bm{\Lambda}^G(x) - \hat{\bm{\Lambda}}^G(x)||$, decreases at a specified logarithmic rate as the grid becomes finer. 
    \item[A5.] The intensity matrix function $\bm{\Lambda}^G(x)$ is log-Hölder continuous. 
\end{enumerate}
\end{assumption}

\begin{remark}\label{rem:log-holder}
While the log-Hölder continuity condition on $\bm{\Lambda}^G(x)$ in Assumption 4 is general, it excludes several important models discussed in Section \ref{sec:ruin-examples}, most notably Poissonian and cumulative Parisian ruin. The issue is that the underlying clock mechanisms in these models are either activated or frozen depending on the sign of the surplus, which introduces a discontinuity in the generator $\bm{\Lambda}^G(x)$ at the level $x=0$. From a practical standpoint, this discontinuity can be managed by introducing a small buffer zone $(-\epsilon, \epsilon)$ around zero, over which the transition between the two regimes of the generator is smoothed by a continuous function. While this provides a workable patch, the convergence theory for such an approximation as $\epsilon \to 0$ is not established. The core difficulty is that the current theoretical framework for analyzing space-grid approximations of hybrid SDEs does not cover discontinuous generators. Although it is plausible that the approximation scheme still converges in such cases, a rigorous proof is not yet available and remains an open problem for future research.
\end{remark}

In the second step, the descriptors for the { approximating hybrid risk process $(\hat{L}^G, \hat{R}^G)$ are computed. This is achieved} by analyzing an auxiliary recurrent process, $({ K^{\text{aug}}}, Y^{\text{aug}})$, { which is constructed from the components of the MMBM $(\hat{J}^G, \hat{X}^G)$}. To do so, we first define this auxiliary process on the spatial domain $[c,d]$. The environmental state space is augmented to manage the regeneration cycle, $\mathcal{S}^{\text{aug}} = { \mathcal{S}}_G \cup \mathcal{S}_{abs}$, where ${ \mathcal{S}}_G$ is the { state space of $\hat{J}^G$}, and the set of holding states is $\mathcal{S}_{abs} = \{\partial_{j, \ell} \,:\, j \in { \mathcal{S}}_G, \ell \in \{c,d,+,-\}\} \cup \{\partial_u\}$. Here, a state $\partial_{j, \ell}$ signifies absorption of type $\ell$ from an excursion that was in state $j$, while the special state $\partial_u$ serves as a common pre-start location. The dynamics of this auxiliary process are designed to create a regenerative structure, unfolding in stages for a given starting condition $(i_0, u)$ with $u \in (c,d)$, as detailed next.
\begin{enumerate}
    \item \textbf{Excursion Phase:} The process $({ K^{\text{aug}}}, Y^{\text{aug}})$ evolves with state space ${ \mathcal{S}}_G$, governed by the approximated $q$-killed dynamics with drift $\hat{\mu}(i,y)$ and diffusion $\hat{\sigma}(i,y)$. The excursion runs until the first absorption event occurs. At any point during this phase, while in a premium-paying state from $\mathcal{S}^p$, the process can be terminated by the overall $q$-killing mechanism.

    \item \textbf{Absorption:} Upon termination, the process is transferred to a corresponding holding state where its level $Y^{\text{aug}}$ is frozen.
    \begin{itemize}
        \item If the path hits boundary $c$ or $d$ while in state $j$, ${ K^{\text{aug}}}$ enters $\partial_{j,c}$ or $\partial_{j,d}$ respectively, and the level is frozen at that boundary.
        \item If killed internally by rate $k^+$ or $k^-$ while in state $j$, ${ K^{\text{aug}}}$ enters the corresponding state $\partial_{j,+}$ or $\partial_{j,-}$, and the level is frozen at the point of killing.
    \end{itemize}

    \item \textbf{Regeneration Sequence:} Once in any holding state $\partial_{j, \ell}$ (or after being terminated by the $q$-killing mechanism), a multi-stage reset sequence begins. First, the process remains at its frozen absorption level for an $\mbox{Exp}(1)$ amount of time. It then drifts deterministically with a constant speed of $1$ or $-1$ from the absorption level back to the initial level $u$. Upon reaching $u$, the process is transferred to the common waiting state $\partial_u$, where it remains for another $\mbox{Exp}(1)$ amount of time before finally restarting the excursion by jumping from $\partial_u$ back to the initial state $i_0 \in { \mathcal{S}}_G$.
\end{enumerate}
\begin{figure}[htbp]
 \centering
 \includegraphics[scale=0.38]{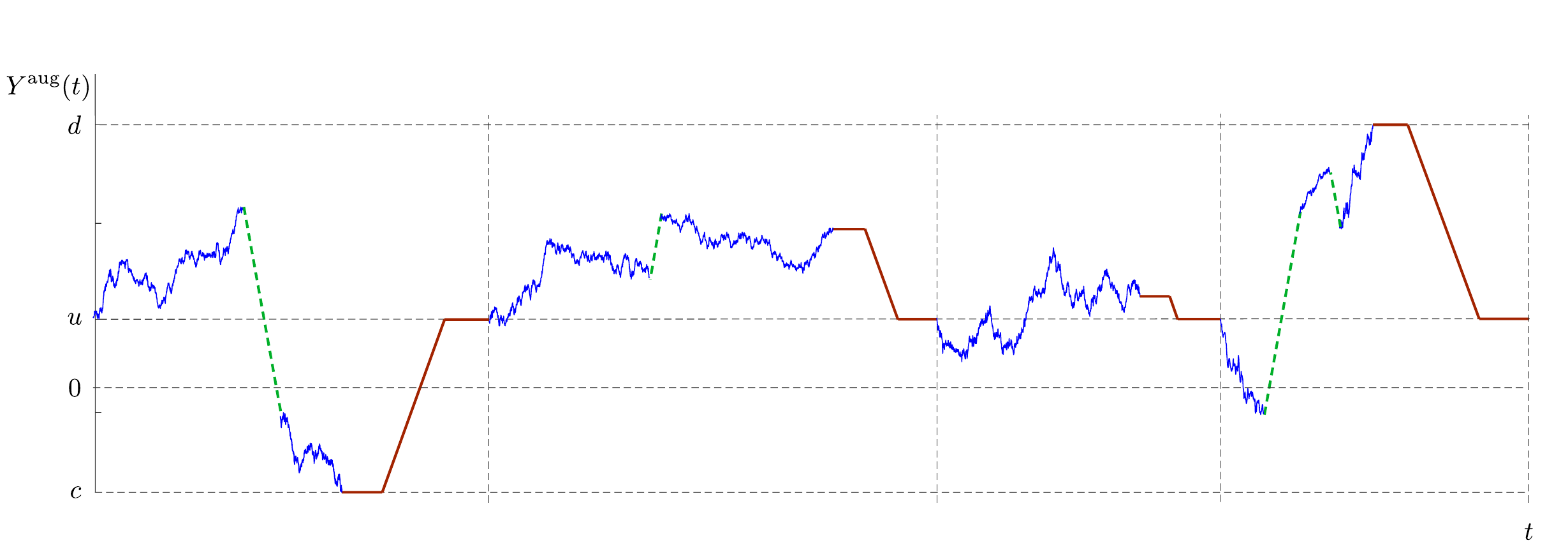}
\caption{A sample path of the auxiliary process $Y^{\text{aug}}$ on the interval $[c,d]$, illustrating the cycle of stochastic excursions and deterministic regenerations. Excursions starting from level $u$ are shown terminating in three ways: by hitting the lower boundary $c$, the upper boundary $d$, and by an internal event. Each absorption triggers an identical regeneration sequence--a holding period, a deterministic drift back to level $u$, and a pre-excursion hold--before a new cycle begins.}
 \label{fig:hybrid-queue}
\end{figure}
See Figure \ref{fig:hybrid-queue} for a visual representation of the aforementioned steps. This construction creates a positive recurrent process whose stationary distribution can be computed efficiently using the matrix-analytic algorithms in \cite{horvath2017matrix, akar2021transient}. The  stationary distribution of $({ K^{\text{aug}}}, Y^{\text{aug}})$ has a mixed nature, consisting of:
\begin{itemize}
    \item A continuous density component, $\boldsymbol{\pi}(y)$, for the process being in an excursion state at level $y \in (c,d)$.
    \item Discrete probability masses (atoms), $\mathbf{p}_c$ and $\mathbf{p}_d$, for the process being held at the boundaries $c$ and $d$, and an atom $p_u$ for being in the pre-start state $\partial_u$.
    \item Continuous density components, $\boldsymbol{\pi}_{\ell}(y)$ for $\ell \in \{+,-\}$, corresponding to the process being in a holding state of type $\ell$ as a result of an internal termination that occurred at level $y$.
\end{itemize}
Following the regenerative process logic from \cite[Theorem 4.1]{albrecher2022space}, we recover the descriptors (denoted with a hat) by normalizing with $p_u$, as summarized in the following theorem.

\begin{theorem}[Descriptors from Stationary Measures]
Let the components of the stationary distribution for the auxiliary process be defined as above. The descriptors for the approximating hybrid risk process $(\hat{L}^G, \hat{R}^G)$, which are the analogous versions of the quantities defined previously for the true risk process $(L^G, R^G)$, are recovered as follows:
\begin{itemize}
    \item The expected occupation time in state $j$ over the interval $(a,b) \subset [c,d]$ is
    \[
    \hat{O}_{ij}(u,q;c,d;a,b) = \frac{1}{p_u}\int_a^b (\boldsymbol{\pi}(y))_j \dd y.
    \]
    \item Taking $c=0$, the probability of absorption at the lower boundary $0$ in state $j$ is
    \[
    \hat{\psi}_{ij}^{0 < +}(u, q; d) = \frac{(\mathbf{p}_{0})_j}{p_u}.
    \]
    \item Taking $c=0$, the probability density of non-ruin termination (type `$+$') at level $y$ is
    \[
    \hat{\psi}_{ij}^{+<0}(u, q; d, y) = \frac{(\boldsymbol{\pi}_{+}(y))_j}{p_u}.
    \]
    \item The probability density of the deficit at ruin-by-termination (type `$-$') at surplus level $y<0$ is
    \[
    \hat{\psi}_{ij}^{-<+}(u, q; c,d, -y) = \frac{(\boldsymbol{\pi}_{-}(y))_j}{p_u}.
    \]
    \item The probability density of non-ruin termination (type `$+$') at level $y$ is
    \[
    \hat{\psi}_{ij}^{+<-}(u, q; c,d, y) = \frac{(\boldsymbol{\pi}_{+}(y))_j}{p_u}.
    \]

\end{itemize}
\end{theorem}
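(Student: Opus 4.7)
The plan is to adapt the regenerative-argument blueprint of \cite[Theorem 4.1]{albrecher2022space} to the enriched auxiliary process $(K^{\text{aug}}, Y^{\text{aug}})$. By construction, successive visits to the common pre-start state $\partial_u$ delimit i.i.d.\ cycles: each cycle consists of an excursion launched from $(i_0,u)$ that is terminated in finite time by one of the competing events (boundary hit, type-$\pm$ internal killing, or $q$-clock), followed by a reset sequence whose duration is the sum of two $\mathrm{Exp}(1)$ holds and one deterministic drift at unit speed from the absorption level back to $u$. All three reset segments have finite expectation, so $(K^{\text{aug}}, Y^{\text{aug}})$ is positive recurrent and admits a unique stationary distribution.

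Next, I would invoke the key renewal theorem in the form that, for any measurable set $A$ in the augmented state space, the stationary mass of $A$ equals $\mathbb{E}_u[\text{time in }A\text{ per cycle}]/\mathbb{E}_u[\text{cycle length}]$, where $\mathbb{E}_u$ denotes expectation started from $\partial_u$. Since $\partial_u$ is visited exactly once per cycle and the sojourn there is $\mathrm{Exp}(1)$, the atom at $\partial_u$ satisfies $p_u = 1/\mathbb{E}_u[\text{cycle length}]$. Consequently, dividing any component of the stationary distribution by $p_u$ returns the expected per-cycle contribution of that component, turning the identification of descriptors into a matter of reading off per-cycle expectations.

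Then I would match each descriptor term by term. The expected time the excursion spends at level $y$ in state $j \in \mathcal{S}_G$ is by definition the integrand of $\hat{O}_{ij}$, which recovers the occupation-time formula. Taking $c=0$, the contribution per cycle to the atom at $\partial_{j,0}$ equals (unit-mean $\mathrm{Exp}(1)$ hold) $\times$ (probability that the excursion is first absorbed at level $0$ in state $j$), and the unit mean collapses this to the boundary-hit probability $\hat{\psi}^{0<+}_{ij}$. Analogously, each continuous density $(\boldsymbol{\pi}_\pm(y))_j$ equals (unit-mean $\mathrm{Exp}(1)$ hold) $\times$ (density of type-$\pm$ internal termination at $(y,j)$ during the excursion), which yields the three remaining formulae, with the domain distinction between $c=0$ (no lower boundary competitor) and $c<0$ (full competition on both half-lines) being inherited directly from the restriction on where $\tau_+$ or $\tau_-$ can occur.

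The main obstacle is bookkeeping rather than conceptual: the $q$-killing mechanism, unlike the type-$\pm$ killings, does not attach an $\mathrm{Exp}(1)$ hold at the killing site, so one must verify that $q$-kills send mass directly through the reset sequence and hence contribute only to $\mathbb{E}_u[\text{cycle length}]$, not to any atom or continuous density. This is precisely the mechanism that converts the undiscounted descriptors into the Laplace-discounted versions with parameter $q$, and it is what guarantees that the ratio with $p_u$ reproduces the target quantities, which carry the implicit factor $\mathbb{E}[\mathrm{e}^{-q\tau}\cdot]$. Once this accounting is made explicit, the existence and numerical computation of the stationary distribution reduce to the matrix-analytic algorithms for piecewise-constant fluid models from \cite{horvath2017matrix, akar2021transient} invoked in the construction.
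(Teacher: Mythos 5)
Your proposal is correct and follows essentially the same route as the paper, which itself only invokes the regenerative-process logic of \cite[Theorem 4.1]{albrecher2022space}: identify i.i.d.\ cycles delimited by visits to $\partial_u$, apply the renewal--reward ratio so that $p_u^{-1}$ equals the expected cycle length, and use the unit-mean $\mathrm{Exp}(1)$ holds to convert the stationary atoms and densities into per-cycle hitting probabilities, termination densities, and occupation times. Your observation that the $q$-killing contributes only to the cycle length (thereby producing the $e_q$-discounted, i.e.\ Laplace-transformed, descriptors) is exactly the accounting the paper relies on implicitly.
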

The total probability of an internal termination event can be found by integrating its density or, equivalently, by summing the elements of the corresponding total probability mass vector $\mathbf{p}_\ell = \int_c^d \boldsymbol{\pi}_\ell(y) \dd y$, and normalizing by $p_u$. { These formulas provide the exact descriptors for the approximating risk process $(\hat{L}^G, \hat{R}^G)$, and thus serve as} accurate and computationally efficient estimates for the descriptors of the original hybrid risk process $({L}^G, {R}^G)$.

\section{Conclusions and Future Work}\label{sec:conclusion}

In this paper, we introduced the hybrid risk process, a versatile framework for risk-theory modeling. By applying a time-change transformation to a hybrid SDE with a partitioned state space, we constructed a process that can simultaneously incorporate many of the features seen in modern risk models, including Markov-modulated parameters, level-dependent premiums and volatility, and heavy-tailed jumps. We have demonstrated that our framework is general enough to embed many well-known models from the literature, unifying concepts that were often developed along separate methodological streams, such as those based on ODEs and those based on fluid queues. The recent interest in dividend and taxation strategies, for example, can be readily analyzed within our setup.

Despite its generality, the framework has limits, which stem from its intensity-based nature. The underlying hybrid SDE does not permit the environmental process to switch immediately upon the surplus crossing a specific level. This leaves out models where the jump mechanism is state-dependent in a discontinuous way, for example, the classical Parisian ruin model, where the grace period clock resets instantaneously upon an upcrossing of the zero boundary \cite{bladt2019parisian}. More generally, reliance on the analytical framework of \cite{albrecher2022space} currently requires the level-dependent generator $\bm{\Lambda}(x)$ to be continuous, which excludes important models whose generators are discontinuous, as outlined in Remark \ref{rem:log-holder}.

These limitations suggest two clear avenues for future research. On a practical level, the first-passage quantities computed in this paper can serve as fundamental building blocks to analyze these more specialized processes on a case-by-case basis. By combining these quantities, it may be possible to arrive at manageable and computable solutions. On a more theoretical level, a second, more ambitious project would be to extend the framework of \cite{albrecher2022space} to accommodate hybrid SDEs that allow for state transitions triggered by hitting specific levels, that is, having ``atoms in space''. This would require developing significant new theory but would greatly expand the class of solvable models.

The incorporation of the Generalized Omega model, which can simultaneously exhibit a finite horizon, a cumulative grace period, and a direct bankruptcy rate, serves as an illustration of the flexibility of the framework. However, our framework can also guide the design of new ruin criteria. The ability to specify arbitrary level-dependent subintensity matrices \(\bm{\Lambda}_+(x)\) and \(\bm{\Lambda}_-(x)\) provides a grammar for designing novel and practically relevant ruin criteria tailored to specific regulatory or business contexts. A full exploration of these possibilities is beyond the scope of this paper and is a topic for future work.

Finally, we have outlined how to use the space-grid approximation technique from \cite{albrecher2022space} to efficiently compute important ruin descriptors for a wide variety of models within our framework. By linking the hybrid risk process to a multi-regime MMBM, we can leverage the existing computational tools available for that class of processes, making our framework not only theoretically sound but also practically applicable.
\section*{Acknowledgments}
O. Peralta’s work was supported by the Asociación Mexicana de Cultura, A. C.


\bibliographystyle{alpha}
\bibliography{RiskHybridSDE}

\newcommand{\etalchar}[1]{$^{#1}$}
\begin{thebibliography}{BBDSS{\etalchar{+}}05}

\bibitem[AA10]{asmussen2010ruin}
S{\o}ren Asmussen and Hansj{\"o}rg Albrecher.
\newblock {\em Ruin probabilities}, volume~14 of {\em Advanced Series on
  Statistical Science \& Applied Probability}.
\newblock World Scientific, 2010.

\bibitem[AAU02]{asmussen2002erlangian}
Soren Asmussen, Florin Avram, and Miguel Usabel.
\newblock {E}rlangian approximations for finite-horizon ruin probabilities.
\newblock {\em ASTIN Bulletin: The Journal of the IAA}, 32(2):267--281, 2002.

\bibitem[AB96]{asmussen1996phase}
S{\o}ren Asmussen and Mogens Bladt.
\newblock Phase-type distributions and risk processes with state-dependent
  premiums.
\newblock {\em Scandinavian Actuarial Journal}, 1996(1):19--36, 1996.

\bibitem[AB18]{albrecher2018ph}
Hansjoerg Albrecher and Mogens Bladt.
\newblock Inhomogeneous phase-type distributions and heavy tails.
\newblock {\em Journal of Applied Probability}, 56:1044--1064, 12 2018.

\bibitem[ABT11]{albrecher2011optimal}
Hansj{\"o}rg Albrecher, Nicole B{\"a}uerle, and Stefan Thonhauser.
\newblock Optimal dividend-payout in random discrete time.
\newblock {\em Statistics \& Risk Modeling}, 28(3):251--276, 2011.

\bibitem[ACT11]{albrecher2011randomized}
Hansj{\"o}rg Albrecher, Eric~CK Cheung, and Stefan Thonhauser.
\newblock Randomized observation periods for the compound {P}oisson risk model:
  Dividends.
\newblock {\em ASTIN Bulletin: The Journal of the IAA}, 41(2):645--672, 2011.

\bibitem[ACT13]{albrecher2013randomized}
Hansjoerg Albrecher, Eric~CK Cheung, and Stefan Thonhauser.
\newblock Randomized observation periods for the compound {P}oisson risk model:
  the discounted penalty function.
\newblock {\em Scandinavian Actuarial Journal}, 2013(6):424--452, 2013.

\bibitem[AGHT21]{akar2021transient}
Nail Akar, Onur Gursoy, Gabor Horvath, and Miklos Telek.
\newblock Transient and first passage time distributions of first-and
  second-order multi-regime {M}arkov fluid queues via {ME}-fication.
\newblock {\em Methodology and Computing in Applied Probability},
  23(4):1257--1283, 2021.

\bibitem[AGS11]{albrecher2011gammaomega}
Hansj{\"o}rg Albrecher, Hans~U Gerber, and Elias~SW Shiu.
\newblock The optimal dividend barrier in the {G}amma--{O}mega model.
\newblock {\em European Actuarial Journal}, 1:43--55, 2011.

\bibitem[AI17]{albrecher2017strikingly}
Hansj{\"o}rg Albrecher and Jevgenijs Ivanovs.
\newblock Strikingly simple identities relating exit problems for {L}\'evy
  processes under continuous and {P}oisson observations.
\newblock {\em Stochastic Processes and their Applications}, 127(2):643--656,
  2017.

\bibitem[AIZ16]{albrecher2016exit}
Hansj{\"o}rg Albrecher, Jevgenijs Ivanovs, and Xiaowen Zhou.
\newblock Exit identities for {L}{\'e}vy processes observed at {P}oisson
  arrival times.
\newblock {\em Bernoulli}, 22(3):1364--1382, 2016.

\bibitem[AKP04]{avram2004exit}
Florin Avram, Andreas~E Kyprianou, and Martijn~R Pistorius.
\newblock Exit problems for spectrally negative {L}{\'e}vy processes and
  applications to ({C}anadized) {R}ussian options.
\newblock {\em The Annals of Applied Probability}, 14(1):215--238, 2004.

\bibitem[AP22]{albrecher2022space}
Hansjoerg Albrecher and Oscar Peralta.
\newblock Space-grid approximations of hybrid stochastic differential equations
  and first passage properties.
\newblock {\em arXiv preprint arXiv:2211.01844}, 2022.

\bibitem[APY18]{avram2018spectrally}
Florin Avram, Jos{\'e}-Luis P{\'e}rez, and Kazutoshi Yamazaki.
\newblock Spectrally negative {L}{\'e}vy processes with {P}arisian reflection
  below and classical reflection above.
\newblock {\em Stochastic Processes and their Applications}, 128(1):255--290,
  2018.

\bibitem[Asm89]{asmussen1989risk}
S{\o}ren Asmussen.
\newblock Risk theory in a {M}arkovian environment.
\newblock {\em Scandinavian Actuarial Journal}, 1989(2):69--100, 1989.

\bibitem[Asm95]{asmussen1995stationary}
S{\o}ren Asmussen.
\newblock Stationary distributions for fluid flow models with or without
  {B}rownian noise.
\newblock {\em Communications in Statistics. Stochastic Models}, 11(1):21--49,
  1995.

\bibitem[Ava09]{avanzi2009strategies}
Benjamin Avanzi.
\newblock Strategies for dividend distribution: {A} review.
\newblock {\em North American Actuarial Journal}, 13(2):217--251, 2009.

\bibitem[BBDSS{\etalchar{+}}05]{badescu2005risk}
Andrei Badescu, Lothar Breuer, Ana Da~Silva~Soares, Guy Latouche, Marie-Ange
  Remiche, and David Stanford.
\newblock Risk processes analyzed as fluid queues.
\newblock {\em Scandinavian Actuarial Journal}, 2005(2):127--141, 2005.

\bibitem[BK07]{bauerle2005markov}
Nicole B{\"a}uerle and Mirko K{\"o}tter.
\newblock {M}arkov-modulated diffusion risk models.
\newblock {\em Scandinavian Actuarial Journal}, 2007(1):34--52, 2007.

\bibitem[BN17]{bladt2017matrix}
Mogens Bladt and Bo~Friis Nielsen.
\newblock {\em Matrix-exponential distributions in applied probability},
  volume~81 of {\em Probability and Its Applications}.
\newblock Springer, 2017.

\bibitem[BNP19]{bladt2019parisian}
Mogens Bladt, Bo~Friis Nielsen, and Oscar Peralta.
\newblock {P}arisian types of ruin probabilities for a class of dependent
  risk-reserve processes.
\newblock {\em Scandinavian Actuarial Journal}, 2019(1):32--61, 2019.

\bibitem[BRP25]{beelders2025evy}
Noah Beelders, Lewis Ramsden, and Apostolos~D Papaioannou.
\newblock {L}{\'e}vy processes under level-dependent {P}oissonian switching.
\newblock {\em arXiv preprint arXiv:2505.00453}, 2025.

\bibitem[BY21]{bladt2021matrixdist}
Martin Bladt and Jorge Yslas.
\newblock {M}atrixdist: {S}tatistics for matrix distributions.
\newblock {\em {R} package version 1.1.9}, 2021.

\bibitem[BY22]{bladt2022heavy}
Martin Bladt and Jorge Yslas.
\newblock Heavy-tailed phase-type distributions: a unified approach.
\newblock {\em Extremes}, 25(3):529--565, 2022.

\bibitem[CKLP20]{czarna2018omega}
Irmina Czarna, Adam Kaszubowski, Shu Li, and Zbigniew Palmowski.
\newblock Fluctuation identities for {O}mega-killed spectrally negative
  {M}arkov additive processes and dividend problem.
\newblock {\em Advances in Applied Probability}, 52(2):404--432, 2020.

\bibitem[Cra59]{cramer1959mathematical}
Harald Cram{\'e}r.
\newblock {\em On the mathematical theory of risk}.
\newblock Centraltryckeriet, 1959.

\bibitem[CZ19]{cheung2019periodic}
Eric~CK Cheung and Zhimin Zhang.
\newblock Periodic threshold-type dividend strategy in the compound {P}oisson
  risk model.
\newblock {\em Scandinavian Actuarial Journal}, 2019(1):1--31, 2019.

\bibitem[{de }57]{definetti1957dividens}
Bruno {de Finetti}.
\newblock {S}u un’impostazione alternativa della teoria collettiva del
  rischio.
\newblock {\em Transactions on XVth International Congress of Actuaries},
  2:433--443, 1957.

\bibitem[DW08]{dassios2008parisian}
Angelos Dassios and Shanle Wu.
\newblock Parisian ruin with exponential claims.
\newblock Technical report, London School of Economics and Political Science,
  LSE Library, 2008.

\bibitem[ESCL07]{elliott2007pricing}
Robert~J Elliott, Tak~Kuen Siu, Leunglung Chan, and John~W Lau.
\newblock Pricing options under a generalized {M}arkov-modulated jump-diffusion
  model.
\newblock {\em Stochastic Analysis and Applications}, 25(4):821--843, 2007.

\bibitem[Ger70]{gerber1970extension}
Hans~U Gerber.
\newblock An extension of the renewal equation and its application in the
  collective theory of risk.
\newblock {\em Scandinavian Actuarial Journal}, 1970(3-4):205--210, 1970.

\bibitem[GR17]{guerin2017distribution}
H{\'e}l{\`e}ne Gu{\'e}rin and Jean-Fran{\c{c}}ois Renaud.
\newblock On the distribution of cumulative {P}arisian ruin.
\newblock {\em Insurance: Mathematics and Economics}, 73:116--123, 2017.

\bibitem[GS06]{gerber2006optimal}
Hans~U Gerber and Elias~SW Shiu.
\newblock On optimal dividends: from reflection to refraction.
\newblock {\em Journal of Computational and Applied Mathematics}, 186(1):4--22,
  2006.

\bibitem[GSY12]{gerber2012omega}
Hans~U Gerber, Elias~SW Shiu, and Hailiang Yang.
\newblock The {O}mega model: from bankruptcy to occupation times in the red.
\newblock {\em European Actuarial Journal}, 2:259--272, 2012.

\bibitem[HT17]{horvath2017matrix}
G{\'a}bor Horv{\'a}th and Mikl{\'o}s Telek.
\newblock Matrix-analytic solution of infinite, finite and level-dependent
  second-order fluid models.
\newblock {\em Queueing Systems}, 87(3):325--343, 2017.

\bibitem[KKM04]{kluppelberg2004ruin}
Claudia Kl{\"u}ppelberg, Andreas~E Kyprianou, and Ross~A Maller.
\newblock Ruin probabilities and overshoots for general {L}{\'e}vy insurance
  risk processes.
\newblock {\em Annals of Applied Probability}, 14(4):1766--1801, 2004.

\bibitem[LP18]{li2018fluctuations}
Bo~Li and Zbigniew Palmowski.
\newblock Fluctuations of {O}mega-killed spectrally negative {L}{\'e}vy
  processes.
\newblock {\em Stochastic Processes and their Applications},
  128(10):3273--3299, 2018.

\bibitem[Lun03]{lundberg1903approximerad}
Filip Lundberg.
\newblock {\em 1. Approximerad framst{\"a}llning af sannolikhetsfunktionen: 2.
  {\AA}terf{\"o}rs{\"a}kring af kollektivrisker}.
\newblock Almqvist \&Wiksells, 1903.

\bibitem[NPYY18]{noba2018optimal}
Kei Noba, Jos{\'e}-Luis P{\'e}rez, Kazutoshi Yamazaki, and Kouji Yano.
\newblock On optimal periodic dividend strategies for {L}{\'e}vy risk
  processes.
\newblock {\em Insurance: Mathematics and Economics}, 80:29--44, 2018.

\bibitem[PS23]{peralta2023ruin}
Oscar Peralta and Matthieu Simon.
\newblock Ruin problems for risk processes with dependent phase-type claims.
\newblock {\em Methodology and Computing in Applied Probability}, 25(4):86,
  2023.

\bibitem[Rog94]{rogers1994fluid}
L.~C.~G. Rogers.
\newblock Fluid models in queueing theory and {W}iener-{H}opf factorization of
  {M}arkov chains.
\newblock {\em The Annals of Applied Probability}, 4(2):390--413, 1994.

\bibitem[SAB{\etalchar{+}}05]{stanford2005phase}
David~A Stanford, Florin Avram, Andrei~L Badescu, Lothar Breuer, A~Da~Silva
  Soares, and Guy Latouche.
\newblock Phase-type approximations to finite-time ruin probabilities in the
  {S}parre-{A}ndersen and stationary renewal risk models.
\newblock {\em ASTIN Bulletin: The Journal of the IAA}, 35(1):131--144, 2005.

\bibitem[{Spa}57]{anderson1957ruin}
E.~{Sparre Anderson}.
\newblock On the collective theory of risk in case of contagion between the
  claims.
\newblock {\em Transactions on XVth International Congress of Actuaries},
  2:219--229, 1957.

\bibitem[Tay80]{Taylor01041980}
G.~C. Taylor.
\newblock Probability of ruin with variable premium rate.
\newblock {\em Scandinavian Actuarial Journal}, 1980(2):57--76, 1980.

\bibitem[YZ10]{yin2010hybrid}
George~G Yin and Chao Zhu.
\newblock {\em Hybrid switching diffusions: properties and applications},
  volume~63 of {\em Stochastic Modelling and Applied Probability}.
\newblock Springer Science \& Business Media, 2010.

\bibitem[ZCY17]{zhao2017optimal}
Yongxia Zhao, Ping Chen, and Hailiang Yang.
\newblock Optimal periodic dividend and capital injection problem for
  spectrally positive {L}{\'e}vy processes.
\newblock {\em Insurance: Mathematics and Economics}, 74:135--146, 2017.

\end{thebibliography}

\end{document}